\newtheorem{theorem}{\bf Theorem}[section]
\theoremstyle{definition}
\newtheorem{example}{\bf Example}[section]
\newcommand{\vecrho}{{\boldsymbol \rho}}
\begin{document}

\title{Rigid folding equations of degree-6 origami vertices}

\author{
Johnna Farnham\thanks{Tufts University, Department of Mathematics, Medford, MA}, Thomas C. Hull\thanks{Western New England University, Department of Mathematics, Springfield, MA, {\tt thull@wne.edu}}, Aubrey Rumbolt\thanks{Taconic High School, Department of Mathematics, Pittsfield, MA} }


\maketitle

\begin{abstract}
Rigid origami, with applications ranging from nano-robots to unfolding solar sails in space, describes when a material is folded along straight crease line segments while keeping the regions between the creases planar.  Prior work has found explicit equations for the folding angles of a flat-foldable degree-4 origami vertex and some cases of degree-6 vertices. We extend this work to generalized symmetries of the degree-6 vertex where all sector angles equal $60^\circ$. We enumerate the different viable rigid folding modes of these degree-6 crease patterns and then use $2^{nd}$-order Taylor expansions and prior rigid folding techniques to find algebraic folding angle relationships between the creases.  This allows us to explicitly compute the configuration space of these degree-6 vertices, and in the process we uncover new explanations for the effectiveness of Weierstrass substitutions in modeling rigid origami.  These results expand the toolbox of rigid origami mechanisms that engineers and materials scientists may use in origami-inspired designs.
\end{abstract}


\section{Introduction}

Over the past ten years there has been a surge of interest in applications of origami, the art of paper folding, in engineering \cite{Evans}, physics \cite{Silverberg}, and architecture \cite{Hoberman}.  Of particular use to these fields is \textit{rigid origami}, where we attempt to flex a flat sheet made of stiff polygons joined edge-to-edge by hinges, called the \textit{crease pattern} of the origami \cite{Tachi1}.  Such flexible crease patterns have the advantage of being flat and thus easy to manufacture, and the mechanics present during the folding and unfolding process operates independent of scale, whether used for deploying large solar panels in space \cite{Lang} or microscopic polymer gel membranes \cite{gels}.

An essential ingredient for the design of rigid origami structures is a thorough understanding of the \textit{configuration space} ${\cal C}$ of the rigidly folding crease pattern, a union of manifolds in $\mathbb{R}^n$, with $n$ the number of creases, where each coordinate tracks the \textit{folding angle} (the signed deviation from the unfolded state) of a crease.  Knowing explicit relationships between the folding angles during the rigid folding and unfolding process allows designers 
to calculate relative folding speeds between adjacent panels and plan rotational spring torque to actuate each crease in order to have the structure \textit{self-fold} to a desired state \cite{Tachi2,Stern}.  Normally engineers prefer rigid origami crease patterns that possess a single degree of freedom (DOF), such as crease patterns that consist only of degree-4 vertices, because their configuration spaces are completely understood \cite{Izmestiev}.

In this paper we show how configuration spaces can be similarly well-understood for rigid origami vertices with higher DOF if we impose symmetry on the folding angles.  This approach is of use in applications because actuators for folding creases can often be programmed to impose symmetry on the rigid folding, e.g. by using springs of prescribed strengths.  Additionally, adding symmetry constraints will reduce the DOF of the rigid origami vertex, making the folding mechanics more controllable for design.

The idea of using crease pattern symmetry to reduce DOF and obtain folding angle relationships is not new; see for instance \cite{Tachi2,Chen,Feng,Zhang-Chen,Ma}.  However, in this study we undertake the first complete examination of all symmetry possibilities
of the degree-6 origami vertex whose sector angles between creases are all $60^\circ$.  After establishing background and prior work in Section~\ref{sec2}, we will compute the number of symmetrically-different patterns we can place on the sequence of folding angles $(\rho_1,\ldots, \rho_6)$ that are rigidly foldable in Section~\ref{sec3},  and find explicit equations for the folding angle relationships in each case in Section~\ref{sec4}.   Our methods will also allow us to generalize away from the ``all $60^\circ$ angles" crease pattern, so long as the folding angle symmetry can be preserved.

We will also see that, as in the degree-4 case, the Weierstrass substitution $t_i=\tan(\rho_i/2)$ leads to especially nice folding angle relations in many of our degree-6 cases.  We offer an explanation as to why this is the case in Section~\ref{sec5} and offer a list of open questions for future study.

\section{Background on rigid foldings}\label{sec2}

We follow the notation from \cite{Origametry}.  A crease pattern $(G,P)$ is a straight-line embedding of a planar graph $G=(V,E)$ on a compact region $P\subset\mathbb{R}^2$.  
We define a continuous function $f:P\to\mathbb{R}^3$ to be a \textit{rigid folded state} of the crease pattern $G$ on $P$ if for every face $F$ of $G$ (including the faces that border the boundary of $P$) we have that  $f$ restricted to $F$ is an isometry and our folded image $f(P)$ has no self-intersections.\footnote{Self-intersections in rigid origami are difficult to quantify in general \cite{belcastro}, but since we focus on folding crease patterns with only one vertex, we can treat self-intersections informally as when two faces of $G$ have intersecting interiors in $f(P)$.} Each crease line $c_i\in E$ will border two faces, and the signed supplement of the dihedral angle between these faces in a rigid folded state $f$ is called the \textit{folding angle} $\rho_i$ of $c_i$ under $f$.  We denote the set of folding angles for all the creases in a rigid folded state by the \textit{folding angle function} $\mu(c_i)=\rho_i$.  If $\rho_i=0$ then the crease $c_i$ is \textit{unfolded} and if $\rho_i>0$ (resp. $\rho_i<0$) then $c_i$ is said to be a \textit{valley} (resp. \textit{mountain}) crease. If any crease has $\rho_i=\pm\pi$ then that crease has been \textit{folded flat} and represents the largest magnitude a folding angle can achieve.

If $(G,P)$ has $n$ creases then the \textit{parameter space} of the crease pattern is $[-\pi,\pi]^n$, where each point $\vecrho\in [-\pi,\pi]^n$ has coordinates equal to a possible folding angle $\rho_i$ of crease $c_i$ in a rigid folded state $f$. The \textit{configuration space} ${\cal C}(G)$ of the crease pattern is the subset of points $\vecrho=(\rho_1,\ldots, \rho_n)$ such that there exists a rigid folded state $f$ whose folding angle function is $\mu(c_i)=\rho_i$.  We then say  that a crease pattern $(G,P)$ has a \textit{rigid folding} if there exists a path $\gamma:[a,b]\to{\cal C}(G)$ and a parameterized family of rigid folded states $f(t)$ on $(G,P)$ such that the folding angle function for $f(t)$ is given by $\gamma(t)$. 

One of the main problems in rigid origami theory is to describe the configuration space of a given crease pattern, which can be done by finding expressions for the folding angle functions.  Certainly a first step is to do this for \textit{single-vertex} crease patterns $(G,P)$, where only one vertex of $G$ is in the interior of $P$.  The case where this vertex is degree four and \textit{flat-foldable} (meaning each crease can be folded flat simultaneously) is completely understood and characterized as follows:

\begin{theorem}\label{thm:deg4}
Let $(G,P)$ be a degree-4 flat-foldable vertex whose plane angles around the vertex are (in order) $\alpha, \beta, \pi-\alpha$, and $\pi-\beta$, where $0<\alpha< \beta\leq \pi/2$, arranged among the creases $c_1,\ldots, c_4$ as in Figure~\ref{fig:deg4}.  Let $\mu(c_i)=\rho_i$ be the folding angle function for a rigid folded state $f$ of $(G,P)$.  Then there are only two possibilities for an explicit representation of $\mu$, which we call mode 1 and 2:
\begin{align}
\mbox{Mode 1: } & \rho_1=-\rho_3,\ \rho_2=\rho_4,\mbox{ and }\tan\frac{\rho_1}{2}=p(\alpha,\beta)\tan\frac{\rho_2}{2}\\
\mbox{Mode 2: } & \rho_1=\rho_3,\ \rho_2=-\rho_4,\mbox{ and } \tan\frac{\rho_2}{2} = q(\alpha,\beta)\tan\frac{\rho_1}{2}
\end{align}
where 
$$p(\alpha,\beta)= \frac{ \cos\frac{\alpha+\beta}{2}}{\cos\frac{\alpha-\beta}{2} }=
\frac{1-\tan\frac{\alpha}{2}\tan\frac{\beta}{2}} {1+\tan\frac{\alpha}{2}\tan\frac{\beta}{2}} 
\mbox{ and }
q(\alpha,\beta) =  \frac{ \sin\frac{\alpha-\beta}{2}}{\sin\frac{\alpha+\beta}{2} } = 
\frac{\tan\frac{\alpha}{2}-\tan\frac{\beta}{2}}{\tan\frac{\alpha}{2}+\tan\frac{\beta}{2} }. $$
\end{theorem}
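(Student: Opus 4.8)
The plan is to reduce the statement to the single-vertex rigid-folding consistency condition and then solve the resulting system explicitly. Following the framework of Section~\ref{sec2} and \cite{Origametry}, a tuple $(\rho_1,\rho_2,\rho_3,\rho_4)$ is the folding-angle function of a rigid folded state of this vertex precisely when the alternating product of sector rotations and fold rotations, taken once around the vertex, is the identity matrix. I find it cleanest to read this condition off the unit sphere: the four creases become unit vectors emanating from the vertex, consecutive vectors are separated by geodesic arcs whose lengths are the sector angles $\alpha,\beta,\pi-\alpha,\pi-\beta$, and a rigid folded state is an isometric placement in $\mathbb{R}^3$ of the resulting closed spherical quadrilateral in which each folding angle $\rho_i$ is the signed supplement of the interior (dihedral) angle at the $i$-th vertex. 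In particular $\cos\rho_i=-\cos A_i$, where $A_i$ is that interior spherical angle; this is the form of the flat-foldability data (Kawasaki's equal alternating sums) that the computation will use.

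First I would extract the two modes. Cutting the spherical quadrilateral along the diagonal joining vertices $1$ and $3$ yields two spherical triangles sharing a side of length $d$, and the spherical law of cosines gives two expressions for $\cos d$,
\[ \cos d=\cos\alpha\cos\beta+\sin\alpha\sin\beta\cos A_2=\cos(\pi-\alpha)\cos(\pi-\beta)+\sin(\pi-\alpha)\sin(\pi-\beta)\cos A_4. \]
Using $\cos(\pi-x)=-\cos x$ and $\sin(\pi-x)=\sin x$, the two sides agree except in a single cosine term, forcing $\cos A_2=\cos A_4$ and hence $\rho_2=\pm\rho_4$. The diagonal joining vertices $2$ and $4$ gives $\rho_1=\pm\rho_3$ in the same way. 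Of the four sign combinations, a short orientation argument (the spherical law of sines, or tracking the signed interior angles so that the quadrilateral actually closes up) rules out the two ``like-sign'' cases when $\alpha<\beta$, leaving exactly the pairings $\{\rho_1=-\rho_3,\ \rho_2=\rho_4\}$ and $\{\rho_1=\rho_3,\ \rho_2=-\rho_4\}$ of modes~1 and~2.

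Next I would pin down the functional relation inside each mode. Imposing the mode-1 identities back in the consistency condition leaves a single independent scalar equation relating $\rho_1$ and $\rho_2$ through $\alpha,\beta$; equivalently, one eliminates the diagonal length $d$ between the law-of-cosines relations for the two triangles. Applying the Weierstrass substitution $t_i=\tan(\rho_i/2)$ and clearing denominators, I expect this equation to collapse to the linear relation $t_1=p(\alpha,\beta)\,t_2$, the half-angle identities converting the sector-angle coefficients into the ratio $\cos\frac{\alpha+\beta}{2}/\cos\frac{\alpha-\beta}{2}$; mode~2 is identical and yields $q(\alpha,\beta)$. The equivalence of the two displayed closed forms for $p$ and $q$ is then a routine check via the product-to-sum identities together with the tangent half-angle formula.

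The main obstacle, I expect, is not any single computation but the \emph{completeness} and \emph{consistency} of the case analysis: showing that modes~1 and~2 genuinely exhaust the configuration space, that the sign choices made above are forced and do not switch along a rigid folding path, and that no spurious roots are introduced when clearing denominators in the Weierstrass step (here the hypothesis $\alpha<\beta$ is what prevents the vertex from admitting extra symmetric folding modes). Additional care is needed at the degenerate boundary, where some $\rho_i=\pm\pi$ or $\beta=\pi/2$, since the auxiliary spherical triangles degenerate and the law-of-cosines bookkeeping must be re-derived there; I would handle these as limiting cases of the generic argument.
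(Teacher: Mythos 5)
Your overall architecture --- passing to the spherical link of the vertex, so that rigid folded states become isometric realizations of a closed spherical quadrilateral with side lengths $\alpha,\beta,\pi-\alpha,\pi-\beta$, with folding angles the signed supplements of its interior angles --- is sound. Note that the paper itself contains no proof of Theorem~\ref{thm:deg4}: it defers to \cite{Tachi2,Origametry}, and your spherical-trigonometry strategy is essentially the classical argument found in those references, so you are on the standard path rather than a genuinely different one. Your first step is also correct as stated: applying the spherical law of cosines to the diagonal joining vertices $1$ and $3$ in each of the two triangles it creates forces $\cos A_2=\cos A_4$, and the other diagonal forces $\cos A_1=\cos A_3$, giving $\rho_2=\pm\rho_4$ and $\rho_1=\pm\rho_3$.

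The genuine gap is in the second half, which is the actual content of the theorem. The linear relation $\tan(\rho_1/2)=p(\alpha,\beta)\tan(\rho_2/2)$ is never derived; you only say you ``expect'' it to collapse out of an elimination, and the specific elimination you propose cannot produce it. Eliminating $\cos d$ between the two law-of-cosines relations for the diagonal $13$ is precisely the computation you already performed to obtain $\cos A_2=\cos A_4$; neither of those equations involves $A_1$ or $A_3$, so no amount of manipulation of that pair can relate $\rho_1$ to $\rho_2$. What is actually needed is the fact that the diagonal splits the interior angles at vertices $1$ and $3$ into sub-angles, one per triangle, say $A_1=u+u'$; one must then express $u$ and $u'$ via further spherical trigonometry (law of cosines with $d$ as a non-opposite side, or half-angle formulas for spherical triangles) and carry out the identity $\tan(\rho_1/2)=\cot\bigl((u+u')/2\bigr)=\bigl(1-\tan\tfrac{u}{2}\tan\tfrac{u'}{2}\bigr)/\bigl(\tan\tfrac{u}{2}+\tan\tfrac{u'}{2}\bigr)$, which is where the multiplier $\cos\tfrac{\alpha+\beta}{2}/\cos\tfrac{\alpha-\beta}{2}$ actually emerges. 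That computation is the substance of the proof and is missing. Relatedly, your ``short orientation argument'' for excluding the like-sign cases $(\rho_1=\rho_3,\ \rho_2=\rho_4)$ is not filled in; the clean tool here is Maekawa's theorem (the mountain-valley count must be $3$--$1$, as the paper notes, whereas the like-sign cases force a $4$--$0$ or $2$--$2$ count away from the flat state), and the gluing choice (which side of the diagonal the second triangle lies on) is exactly what determines $\rho_1=-\rho_3$ versus $\rho_1=\rho_3$, so the sign bookkeeping cannot be separated from the main computation as your outline suggests.
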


\begin{figure}
    \centering
    \includegraphics[width=\linewidth]{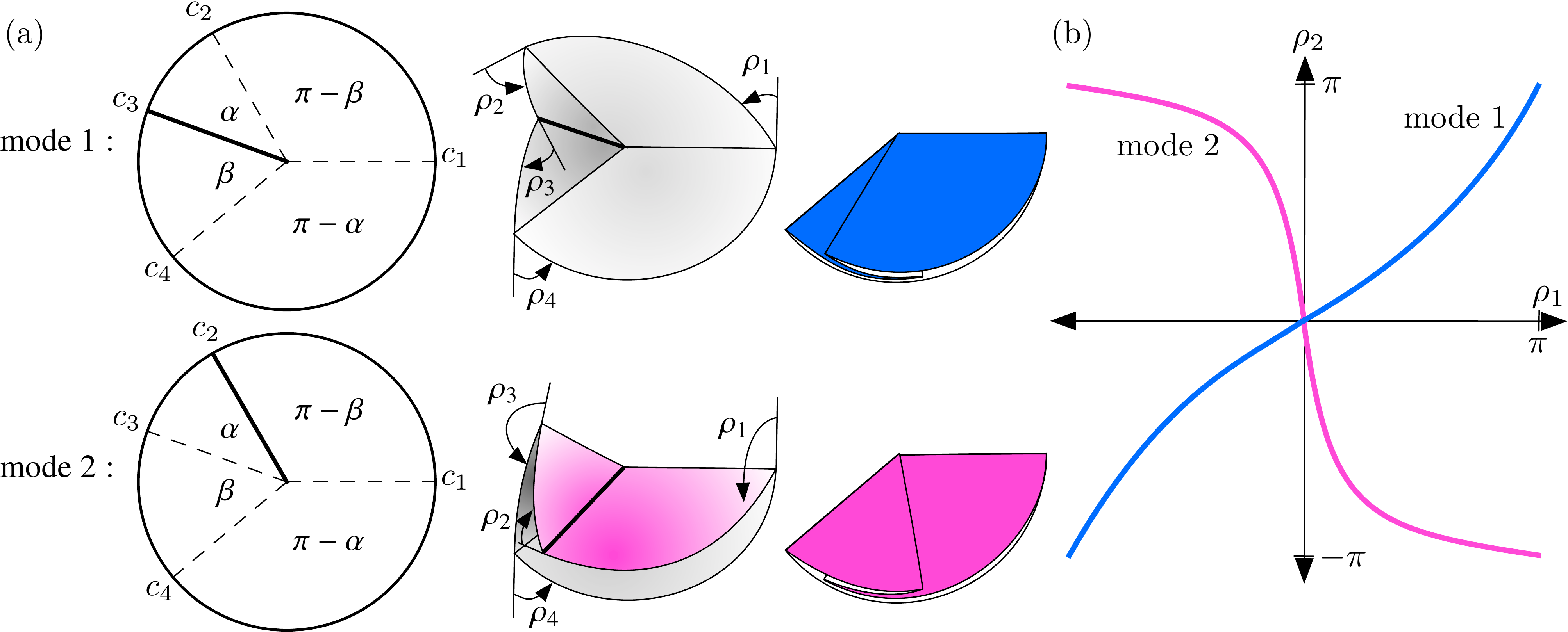}
    \caption{(a) Modes 1 and 2 for a flat-foldable, degree-4 vertex.  (b) The configuration space ${\cal C}(G)$ is the union of two curves, one for each mode. Bold creases are mountains,  others are valleys.}
    \label{fig:deg4}
\end{figure}

See \cite{Tachi2,Origametry} for a proof.  Also, the fact that the opposite plane angles around the vertex are supplementary follows from \textit{Kawasaki's Theorem}, and the fact that we must have one mountain and three valley creases (or vice-versa) as in Figure~\ref{fig:deg4}(a) follows from \textit{Maekawa's Theorem}.  These are two basic theorems on flat-foldable vertices and can be found in \cite{Origametry}.  

The fact that the folding angle expressions in Theorem~\ref{thm:deg4} are linear when parameterized by the Weierstrass substitution is surprising.  We refer to the constants $p(\alpha,\beta)$ and $q(\alpha,\beta)$ as \textit{folding angle multipliers} of their respective rigid foldings.

Also note that the case where $\alpha=\beta<\pi/2$ gives us a crease pattern, called a \textit{bird's foot}, with a line of reflective symmetry.  This results in $p(\alpha,\beta)=q(\alpha,\beta)$, implying that there is only one rigid folding mode.

A tool that can be used to prove Theorem~\ref{thm:deg4}, and that we will employ in this paper, involves rotations about the crease lines to simulate the rigid folding.  Let $R(c_i,\rho_i)\in SO(3)$ be the $3\times 3$ orthogonal matrix that rotates $\mathbb{R}^3$ by $\rho_i$ about the line containing crease $c_i$ in $(G,P)$, where we place the vertex of $G$ at the origin and let $P$ lie in the $xy$-plane.  Define
\begin{equation}\label{eq:R=I}
F(\vecrho)=R(c_1,\rho_1)R(c_2,\rho_2)\cdots R(c_n,\rho_n).
\end{equation}
Then $\vecrho\in{\cal C}(G)$ implies that $F(\vecrho)=I_3$, the $3\times 3$ identity matrix \cite{belcastro}.  It has been proven \cite{Hodge,dual-origami} that a vertex is rigidly-foldable if and only if it is \textit{2nd-order} rigidly-foldable, meaning that the 2nd-order Taylor expansion (about the origin) of $F(\vecrho)$, say where the folding angles are parameterized by time $t$, equals $I_3$.  Writing the folding angles as $\rho_i(t)$, the Taylor expansion is
$$F(\vecrho(t))=  I+\sum_{i=1}^n \left.\frac{\partial F}{\partial\rho_i}\rho'_i(t)\right|_{t=0} t+ \frac{1}{2}
\left.\left(\sum_{i=1}^n\sum_{j=1}^n \frac{\partial^2 F}{\partial\rho_i\partial\rho_j}\rho'_i(t)\rho'_j(t)
+\sum_{i=1}^n\frac{\partial F}{\partial\rho_i}\rho''_i(t)\right)\right|_{t=0} t^2 + o(t^3).$$
Therefore the linear and quadratic terms must be zero. The linear term equalling zero corresponds to the origami vertex being infinitesimally rigidly foldable at the origin (i.e., the unfolded state).  The second-order terms being zero is what guarantees a finite-length rigid folding motion away from the origin.  Following \cite{dual-origami} we can compute these second-order terms from \eqref{eq:R=I} and arrive at the equation
\begin{equation}\label{eq:2nd-order}
\sum_{i,j} 
\begin{pmatrix}
-l_i^yl_j^y & l_a^yl_b^x & 0\\
l_a^xl_b^y & -l_i^x l_j^x & 0\\
0 & 0 &-l_i^x l_j^x - l_i^y l_j^y
\end{pmatrix}
\rho'_i(0)\rho'_j(0) +\sum_i
\begin{pmatrix}
0 & 0 & l_i^y\\
0 & 0 & -l_i^x\\
-l_i^y & l_i^x & 0
\end{pmatrix} \rho''_i(0)=Z,
\end{equation}
where $Z$ is the zero matrix, $a=\min(i,j)$, $b=\max(i,j)$, and we consider our single-vertex creases to be vectors $c_i=(l_i^x, l_i^y, l_i^z)$ (where $l_i^z=0$ at the unfolded state, when $t=0$).  As argued in \cite{dual-origami}, we may assume the folding angle acceleration terms $\rho_i''(0)$ are zero at the origin (i.e., we may assume $\rho_i'$ is constant near the origin because rigid folding paths through the origin will always be mountain-valley symmetric).  Thus for a rigid folding to exist in a neighborhood of the origin, we need to find velocities $\rho_i'(0)$ that make the first matrix sum in Equation~\eqref{eq:2nd-order} the zero matrix.  Surprisingly, often this local approximation around the origin leads to folding angle functions that hold globally for the whole configuration space.

\begin{example}[Trifold]\label{ex:trifold}
As an example, we will use the 2nd-order matrix identity \eqref{eq:2nd-order} to find the folding angle function for $(G_{60},P_{60})$, the degree-6 vertex whose plane angles are all $60^\circ$ (Figure~\ref{fig:trifold}(a)) and that rigidly folds with folding angle symmetry $(\rho_1,\rho_2,\rho_1,\rho_2,\rho_1,\rho_2)$ (Figure~\ref{fig:trifold}(b)) for the creases $c_1,\ldots, c_6$. (This was first derived in \cite{Tachi2,HullTachi1}.) We call this rigid folding symmetry the \textit{trifold} way to fold this vertex. 

\begin{figure}
    \centering
    \includegraphics[width=\linewidth]{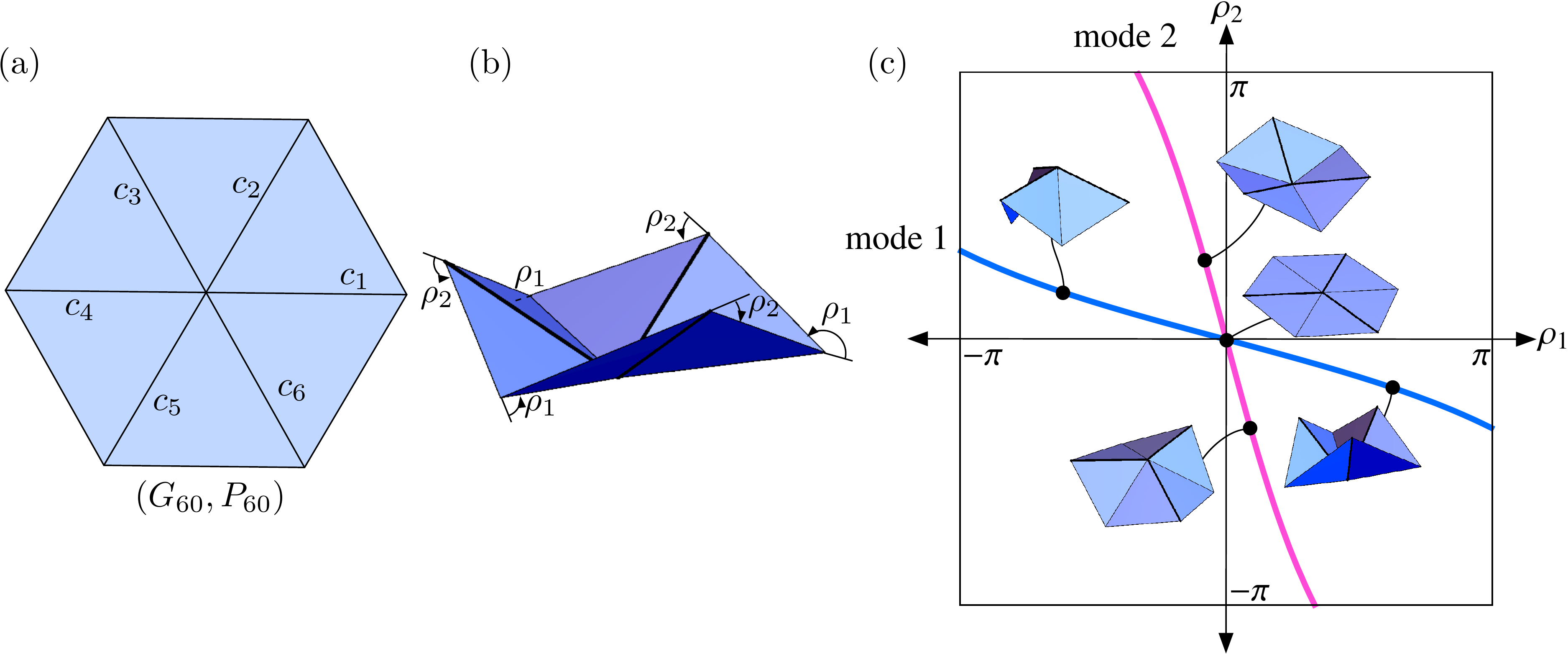}
    \caption{The trifold example. (a) The degree-6, all-$60^\circ$-angle single-vertex crease pattern $(G_{60},P_{60})$. (b) A symmetric $(\rho_1,\rho_2,\rho_1,\rho_2,\rho_1,\rho_2)$ rigid folding of $(G,P)$.  (c) The configuration space curves, which is a subset of the full ${\cal C}(G_{60})$ space, with sample points illustrated by their respective rigid foldings.}
    \label{fig:trifold}
\end{figure}

We set $c_i=(\cos((i-1)\pi/3), \sin((i-1)\pi/3),0)$ for $i=1,\ldots, 6$. Then the first matrix sum in \eqref{eq:2nd-order} becomes $$\begin{pmatrix}
0 & -A & 0\\
A & 0 & 0\\
0 & 0 & 0
\end{pmatrix}\mbox{ where } 
A=\frac{\sqrt{3}}{2}\rho_1'(0)^2+2\sqrt{3}\rho_1'(0)\rho_2'(0)+\frac{\sqrt{3}}{2}\rho_2'(0)^2.$$
Setting this equal to zero and solving for $\rho_2'$ yields two possible folding modes:  $\rho_2'=-(2+\sqrt{3})\rho_1'$ and $\rho_2'=-(2-\sqrt{3})\rho_1'$.  There are many $\rho_1, \rho_2$ relationships that could satisfy these linear differential equations at the origin, such as anything of the form $(\rho_1,\rho_2)=(t+o(t^2), -(2+\sqrt{3})t + o(t^2))$ for the first mode. However, taking a page from the degree-4 case, one should consider the Weierstrass substitution.  It turns out that a \textit{modified} Weierstrass substitution $\tan(\rho_i/4)$ does the trick, and the global folding angle relationships for the trifold are
\begin{equation}\label{eq:trifold}
\tan\frac{\rho_1}{4}=-(2+\sqrt{3})\tan\frac{\rho_2}{4}\mbox{ for mode 1 and }
\tan\frac{\rho_2}{4}=-(2+\sqrt{3})\tan\frac{\rho_1}{4}\mbox{ for mode 2,}
\end{equation}
as proved in \cite{Tachi2,HullTachi1}. These curves are shown in Figure~\ref{fig:trifold}(c).
\end{example}

In general a rigid folding vertex of degree $n$ will have $n-3$ DOF, and so $(G_{60},P_{60})$ will have 3 DOF as it rigidly folds.  Unlike the degree-4 case, there are no known elegant folding angle equations for degree-6 rigid foldings; while such equations can be computed using standard kinematics methods, the resulting equations are quite unwieldy (see Section~\ref{sec4}\ref{sec:other}).   The symmetry imposed in Example~\ref{ex:trifold}, however, reduces the DOF to one, and very nice folding angle equations result.  We can think of the Equations~\eqref{eq:trifold} as carving out a curve for this particular symmetric case from the larger configuration space ${\cal C}(G_{60})$.

\section{Different symmetries of the degree-6 vertex}\label{sec3}

\begin{figure}[b]
    \centering
    \includegraphics[width=\linewidth]{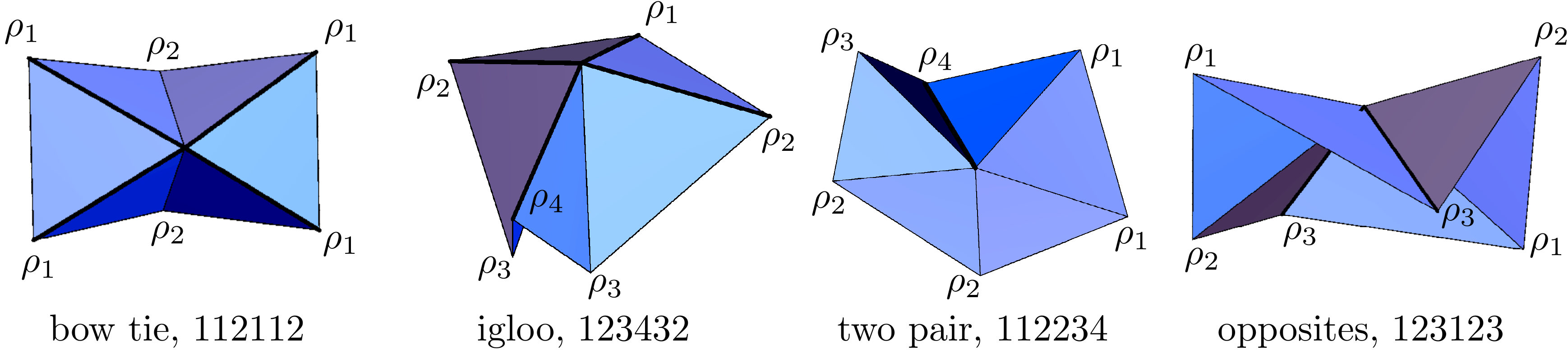}
    \caption{The symmetric $G_{60}$ rigid folding cases from Table~\ref{table1} (excluding the trifold). The number sequence with each case indicates the respective bracelet pattern (equivalent to the indices of the folding angles, in order).}
    \label{fig:symmcases}
\end{figure}

It therefore makes sense to explore all the different symmetry types that the folding angles of $(G_{60},P_{60})$ can achieve and see if their folding angle equations can be found.  A rigid folding of $(G_{60},P_{60})$ with no symmetry would have all different folding angles, $(\rho_1,\rho_2,\rho_3,\rho_4,\rho_5,\rho_6)$.  If we equate the different $\rho_i$ with colors, then classifying the differently-symmetric rigid foldings of $(G_{60},P_{60})$ with $k$ different $\rho_i$ (for $k=1,\ldots, 6$) is almost like finding symmetrically-different 6-bead bracelets where the beads are colored with $k$ colors.  The latter is a classic combinatorics problem and can be solved via Burnside's Lemma or Polya enumeration.  But these two problems are not exactly the same; using numbers for colors, Polya enumeration would distinguish $(111223)$ and $(222113)$ as differently-colored bracelets, but these would equate to the same kind of folding angle symmetry for $(G_{60},P_{60})$, one with three consecutive, equal folding angles, then two equal folding angles, then a third folding angle.  Therefore we need to enumerate and classify not just the different $k$-colored bracelets with six beads, but the \textit{$k$-colored bracelet patterns} on six beads.

Polya enumeration is not enough to solve this problem; while it can be a helpful tool, one needs to examine the individual terms in the cycle index generating function to see if folding symmetries are being counted multiple times.  This can be done by hand, but we double-checked by writing Python code to conduct a thorough enumeration.  Then for each type of bracelet pattern, we used Equation~\eqref{eq:2nd-order} to check if a rigid folding existed in a neighborhood of the origin; if solving the system of equations that \eqref{eq:2nd-order} generated led to a null or complex solution, then a rigid folding would not exist for that pattern.

\begin{table} 
\caption{Bracelet colorings and rigid folding symmetries for $G_{60}$}
\label{table1}
\begin{tabular}{cccc}
\hline
\# colors $k$ & $k$-color bracelet & rigid  & (symmetric pattern, folding name, DOF) \\
 & patterns & foldings \\
\hline
1 & 1 & 0 &  \\
2 & 7 & 2 & (121212, trifold, 1), (122122, bow tie, 1)\\
3 & 14 & 1 & (123123, opposites, 2) \\
4 & 10 & 2 & (123432, igloo, 2), (112234, two pair, 1)\\
5 & 3 & 1 & (112345, almost general, 2)\\
6 & 1 & 1 & (123456, fully general, 3)\\\hline
\end{tabular}
\vspace*{-4pt}
\end{table}

Our results are shown Table~\ref{table1} and illustrated in Figure~\ref{fig:symmcases} (aside from the trifold, which was previously shown in Figure~\ref{fig:trifold}).  It may seem surprising that so few rigid folding patterns exist from the numerous $k$-color bracelet patterns, but many of them reduce to the main folding patterns we discovered: trifold, bowtie, opposites, igloo, and two pair (to be described in Section~\ref{sec4}).  For example, the 3-color bracelet pattern $(123232)$, which converts to folding pattern $(\rho_1,\rho_2,\rho_3,\rho_2,\rho_3,\rho_2)$, generates equations in \eqref{eq:2nd-order} that simplify to $\rho_3=\rho_1$ and the trifold equations \eqref{eq:trifold}.

\section{Equations for symmetric modes}\label{sec4}

We now describe the equations for symmetrically-different rigid foldings of degree-6 vertices obtained from Equation~\eqref{eq:2nd-order}, generalizing them beyond the all-$60^\circ$-angle case when possible while retaining the folding angle symmetry.  Note that the local approximation of Equation~\eqref{eq:2nd-order} does not guarantee its extension, globally, to the whole configuration space, and therefore when such a local-to-global extension is possible we will aim to provide a proof that is independent of Equation~\eqref{eq:2nd-order}. 

\subsection{Generalized trifold, (121212)}\label{sec:gentrifold}

We can generalize the trifold rigid origami from Example~\ref{ex:trifold} by letting the sector angles of the crease pattern alternate $\beta$ and $2\pi/3-\beta$, as in Figure~\ref{fig:gentrifold}(a), while still having the folding angle pattern $(\rho_1,\rho_2,\rho_1,\rho_2,\rho_1,\rho_2)$, meaning that the rigid folds will still have $120^\circ$ rotational symmetry about the vertex.  The 2nd-order approximation of the folding equations near the origin in this case extend globally and give us the following:

\begin{figure}
    \centering
    \includegraphics[width=\linewidth]{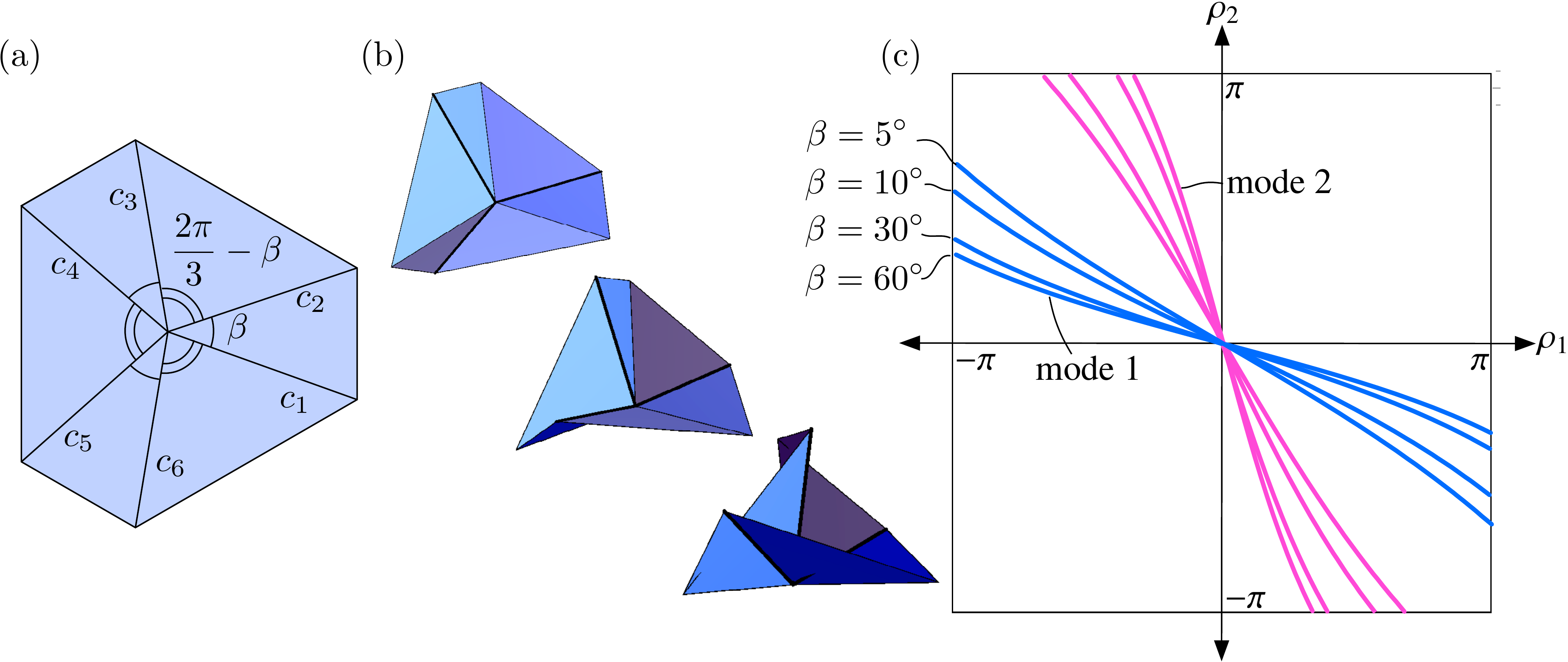}
    \caption{(a) The generalized trifold crease pattern.  (b) A sequence of mode 1 rigid foldings.  (c) Configuration space of the generalized trifold for a few $\beta$ values.}
    \label{fig:gentrifold}
\end{figure}

\begin{theorem}\label{thm:gentrifold}
The generalized trifold crease pattern in Figure~\ref{fig:gentrifold}(a), rigidly folding with folding angle symmetry $(\rho_1,\rho_2,\rho_1,\rho_2,\rho_1,\rho_2)$, has two rigid folding modes.  Mode 1 has
\begin{equation}\label{eq:gentrifold}
    \tan\frac{\rho_1}{4} = -\left(
    \cos\beta + \sqrt{3}\sin\beta + \sqrt{2\sin\beta (\sqrt{3}\cos\beta + \sin\beta)}\right)\tan\frac{\rho_2}{4},
\end{equation}
while the mode 2 equation is the same but with $\rho_1$ and $\rho_2$ reversed.
\end{theorem}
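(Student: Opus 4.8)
The plan is to avoid the second-order approximation of Equation~\eqref{eq:2nd-order} altogether and instead work directly with the \emph{exact} closure condition $F(\vecrho)=I_3$ from \eqref{eq:R=I}, which for a single-vertex crease pattern cuts out the configuration space (up to the informal self-intersection caveat). This automatically yields relations valid on the whole configuration-space component rather than only near the origin, which is exactly the ``independent of \eqref{eq:2nd-order}, globally valid'' proof the section calls for. The engine is the $120^\circ$ rotational symmetry of the folded state. Placing the vertex at the origin with the crease pattern in the $xy$-plane, set $c_1=(1,0,0)$ and $c_2=(\cos\beta,\sin\beta,0)$, and let $\Gamma\in SO(3)$ be rotation by $2\pi/3$ about the $z$-axis, so that $c_3=\Gamma c_1$, $c_4=\Gamma c_2$, $c_5=\Gamma^2 c_1$, $c_6=\Gamma^2 c_2$. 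Using $R(\Gamma c,\rho)=\Gamma R(c,\rho)\Gamma^{-1}$ and writing $M=R(c_1,\rho_1)R(c_2,\rho_2)$, the folding angle symmetry $(\rho_1,\rho_2,\rho_1,\rho_2,\rho_1,\rho_2)$ collapses the six-factor product to
\begin{equation*}
F(\vecrho)=M\cdot\Gamma M\Gamma^{-1}\cdot\Gamma^2 M\Gamma^{-2}=(M\Gamma)^3,
\end{equation*}
where the last equality uses $\Gamma^3=I_3$. Hence $F(\vecrho)=I_3$ if and only if $(M\Gamma)^3=I_3$, i.e.\ $M\Gamma$ is a rotation by $0$ or $\pm 2\pi/3$; excluding the trivial (unfolded) case, the condition is simply $\mathrm{tr}(M\Gamma)=0$.

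To extract a scalar equation cleanly I would pass to unit quaternions, representing $R(c_i,\rho_i)$ by $q_i=\cos(\rho_i/2)+\sin(\rho_i/2)\,c_i$ and $\Gamma$ by $q_\Gamma=\tfrac12+\tfrac{\sqrt3}{2}\mathbf{k}$. Writing the rotation angle of $M\Gamma$ as $\theta$, its quaternion has scalar part $\cos(\theta/2)$, and $\mathrm{tr}(M\Gamma)=4\cos^2(\theta/2)-1$, so the order-$3$ condition becomes $\mathrm{Re}(q_1q_2q_\Gamma)=\pm\tfrac12$. A one-line quaternion multiplication gives
\begin{equation*}
2\,\mathrm{Re}(q_1q_2q_\Gamma)=\cos\tfrac{\rho_1}{2}\cos\tfrac{\rho_2}{2}-\kappa\,\sin\tfrac{\rho_1}{2}\sin\tfrac{\rho_2}{2},\qquad \kappa=\cos\beta+\sqrt3\sin\beta,
\end{equation*}
so the closure condition is $\cos\tfrac{\rho_1}{2}\cos\tfrac{\rho_2}{2}-\kappa\sin\tfrac{\rho_1}{2}\sin\tfrac{\rho_2}{2}=\pm1$.

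The final step explains the \emph{modified} Weierstrass substitution: applying the ordinary Weierstrass substitution to the \emph{half}-angles, i.e.\ setting $t_i=\tan(\rho_i/4)$ so that $\cos(\rho_i/2)=\tfrac{1-t_i^2}{1+t_i^2}$ and $\sin(\rho_i/2)=\tfrac{2t_i}{1+t_i^2}$, and clearing denominators. The branch containing the unfolded state $t_1=t_2=0$ is the $+1$ branch (there $M\Gamma=\Gamma$), and it reduces to the homogeneous quadratic $t_1^2+2\kappa\,t_1t_2+t_2^2=0$, whose two linear factors are $t_1=\bigl(-\kappa\pm\sqrt{\kappa^2-1}\bigr)t_2$. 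Since $\kappa^2-1=2\sin\beta(\sqrt3\cos\beta+\sin\beta)$, the lower sign reproduces \eqref{eq:gentrifold} exactly, while the two roots are reciprocal multipliers, which is precisely the $\rho_1\leftrightarrow\rho_2$ swap giving mode~2; specializing to $\beta=\pi/3$ returns the trifold multiplier $-(2+\sqrt3)$ of \eqref{eq:trifold} as a sanity check.

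The part needing the most care is not the algebra but the bookkeeping of branches and global validity. The condition $\mathrm{Re}(q_1q_2q_\Gamma)=-\tfrac12$ produces a spurious \emph{constant-product} relation $t_1t_2=\kappa\pm\sqrt{\kappa^2-1}$ that does not pass through the origin; I would argue this component is disconnected from the unfolded state (and inadmissible within $|\rho_i|\le\pi$, i.e.\ $t_i\in[-1,1]$), so that exactly the two factors of the $+1$ branch survive as genuine folding modes. I therefore expect the main obstacle to be (i) justifying that $F(\vecrho)=I_3$ is \emph{sufficient}, not merely necessary, for membership in $\mathcal{C}(G_{60})$ here, and (ii) certifying that each linear factor is realized by an honest one-parameter rigid folding over its full admissible range without self-intersection, so that the derived equation holds globally and not just formally.
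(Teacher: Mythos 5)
Your proposal is correct, and at the strategic level it follows the same skeleton as the paper's proof: both use the $120^\circ$ symmetry to collapse the closure condition \eqref{eq:R=I} to the statement that a single matrix is an order-$3$ rotation (your $M\Gamma = R_x(\rho_1)R_z(\beta)R_x(\rho_2)R_z(2\pi/3-\beta)$ is literally the paper's $M$), both impose the trace-zero condition for a $\pm 120^\circ$ rotation, both discard the solution branch that misses the origin, and both finish by converting to $\tan(\rho_i/4)$. Where you genuinely depart is the computational engine, and your route buys real clarity. The paper expands $\mathrm{Tr}(M)$ in full and factors it ``by a sequence of trigonometric identity manipulations'' (not shown), then leans on \cite{Tachi2} for the passage to $\tan(\rho_i/4)$. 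Your quaternion calculation replaces both steps: the scalar part $2\,\mathrm{Re}(q_1q_2q_\Gamma)=\cos\tfrac{\rho_1}{2}\cos\tfrac{\rho_2}{2}-\kappa\sin\tfrac{\rho_1}{2}\sin\tfrac{\rho_2}{2}$ is a one-line computation (and is exactly the paper's factor $X$, in the sense that $\mathrm{Tr}=X^2-1$ with $X=2\,\mathrm{Re}$), the closure condition becomes a transparent $\pm 1$ dichotomy, the homogeneous quadratic $t_1^2+2\kappa t_1t_2+t_2^2=0$ and its two linear factors drop out by hand, and the ``modified'' Weierstrass substitution is demystified as the ordinary one applied to the half-angles that quaternions natively carry. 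Your formula even serves as an independent check on the paper: the unfactored trace expression displayed in \eqref{eq:gentri1} must contain a typographical error, since it is not symmetric under $\rho_1\leftrightarrow\rho_2$ (the true trace is, by your formula) and evaluates to $3/2$ at $\beta=\pi/3$, $\rho_1=\rho_2=\pi$, where $M=I_3$ has trace $3$; the factored form, and hence the theorem, is fine.

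Two slips in your branch bookkeeping, neither fatal. First, the case excluded in passing from $(M\Gamma)^3=I_3$ to $\mathrm{tr}(M\Gamma)=0$ is $M\Gamma=I_3$, which is \emph{not} the unfolded state: the unfolded state has $M\Gamma=\Gamma$, trace $0$, sitting on your $+1$ branch (as you later use). The case $M\Gamma=I_3$ can actually occur, but only at flat-folded corners (e.g. $\beta=\pi/3$, $\rho_1=\rho_2=\pm\pi$), which are all-mountain/all-valley states that violate Maekawa's theorem and self-intersect, so excluding them is legitimate --- just not for the reason you gave. Second, your parenthetical claim that the $-1$ branch is inadmissible within $|\rho_i|\le\pi$ is false: for $\beta=60^\circ$ it contains $\rho_1=\rho_2=\arccos(-1/3)\approx 109.5^\circ$, i.e. $t_1=t_2=\sqrt{2-\sqrt{3}}\in(0,1)$; this is the all-valley configuration in which closure holds but the link of the vertex wraps twice around it, so the honest reason to reject that branch is self-intersection. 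Your primary argument --- disconnectedness from the unfolded state --- is sound (the mode lines satisfy $t_1t_2\le 0$ while the $-1$ branch has $t_1t_2>0$) and is precisely the level of rigor of the paper's own dismissal of its second factor (``equals $-2$ at the origin''); likewise your stated obstacles (i) and (ii), sufficiency of $F(\vecrho)=I_3$ and non-self-intersection over the full range, are gaps the published proof shares rather than ones you introduce.
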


Note that if $\beta=60^\circ$ then Equation~\eqref{eq:gentrifold} becomes Equation~\eqref{eq:trifold} of the trifold in Example~\ref{ex:trifold}.

\begin{proof}
We follow a similar approach to the proof of Equation~\eqref{eq:trifold} given in \cite{Tachi2}.
By the $120^\circ$ rotational symmetry of the crease pattern and folded state, the matrix Equation~\eqref{eq:R=I} simplifies to 
$$
(R_x(\rho_1)R_z(\beta)R_x(\rho_2)R_z(2\pi/3-\beta))^3=I_3,
$$
where we assume that the paper lies in the $xy$-plane, $c_1$ is along the positive $x$-axis, and $R_x(\theta), R_z(\theta)$ are the matrices that rotate about the $x$- and $z$-axes by $\theta$, respectively.  This is equivalent to saying that $M=R_x(\rho_1)R_z(\beta)R_x(\rho_2)R_z(2\pi/3-\beta)$ is a $120^\circ$ rotation of $\mathbb{R}^3$ about some axis.  A common fact (derived from Rodrigues' rotation matrix formula) is that the trace of such a rotation matrix will equal $1+2\cos(120^\circ) = 0$.  On the other hand, 
\begin{equation}\label{eq:gentri1}
\begin{split}
    \mbox{Tr}(M) & =  \cos\beta \sin\left(\beta-\frac{\pi}{6}\right) - \cos\left(\beta-\frac{\pi}{6}\right)\sin\beta (\cos\rho_1 -\cos\rho_2) + \\
   & \quad(1+\sin\left(\beta-\frac{\pi}{6}\right)\cos\beta)\cos\rho_1 \cos\rho_2 - 
    (\cos\beta + \sin\left(\beta-\frac{\pi}{6}\right))\sin\rho_1 \sin\rho_2 \\
   & = \left(\frac{C\sin\beta}{\sqrt{(\cos\beta-1)(S-1)}}\cos\frac{\rho_1+\rho_2}{2}-\sqrt{(\cos\beta-1)(S-1)}\cos\frac{\rho_1-\rho_2}{2}+1\right)\cdot \\
   & \quad\left(\frac{C\sin\beta}{\sqrt{(\cos\beta-1)(S-1)}}\cos\frac{\rho_1+\rho_2}{2}-\sqrt{(\cos\beta-1)(S-1)}\cos\frac{\rho_1-\rho_2}{2}-1\right)
\end{split}
\end{equation}
where $S=\sin(\beta-\pi/6)$ and $C=\cos(\beta-\pi/6)$.  The factorization of Tr$(M)$ in Equation~\eqref{eq:gentri1} is obtained from a sequence of trigonometric identity manipulations.  Since Tr$(M)=0$, we can see that the second factor of \eqref{eq:gentri1} does not represent our desired configuration space for the generalized trifold.  For example, if we set $\rho_1=\rho_2=0$, then this factor simplifies to the constant $-2$, which is never zero.  Therefore the first factor must equal zero, and the resulting equation has solution curves for $(\rho_1,\rho_2)$ as shown in Figure~\ref{fig:gentrifold}(c) for various values of $\beta$.  As seen in \cite{Tachi2}, this type of equation can be re-written in terms of $\tan(\rho_1/4)$ and $\tan(\rho_2/4)$.  Doing this results in the equivalent expression \eqref{eq:trifold} and its mode 2 counterpart.
\end{proof}

\subsection{Generalized bow tie, (112112)}\label{sec:bowtie}

\begin{figure}
    \centering
    \includegraphics[width=\linewidth]{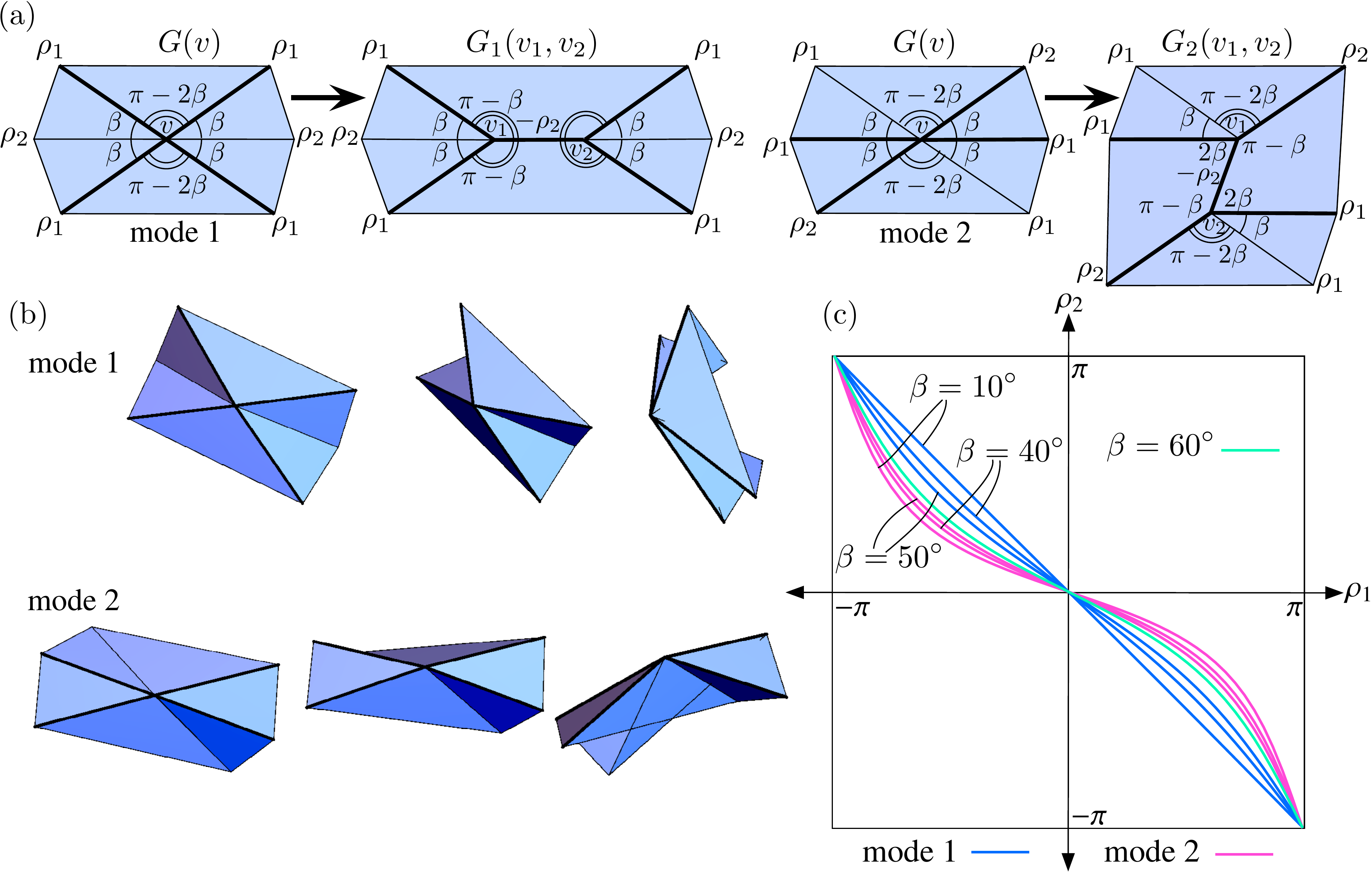}
    \caption{(a) The generalized bow tie crease patterns, modes 1 and 2.  (b) Rigid foldings of the bow ties.  (c) Configuration space of the generalized bow tie for a few $\beta$ values, where the modes converge when $\beta=60^\circ$.}
    \label{fig:genbowtie}
\end{figure}

The bow tie symmetry has folding angles in the pattern $(\rho_1,\rho_1, \rho_2, \rho_1, \rho_1,\rho_2)$.  When folded on $(G_{60},P_{60})$ there is only one folding mode up to rotation about the vertex.  If we generalize so that the crease pattern still has horizontal and vertical symmetry, as shown in Figure~\ref{fig:genbowtie}(a), we have two folding modes that still produce a bow tie shape (Figure~\ref{fig:genbowtie}(b)).  The mountain-valley (MV) assignment labeled mode 1 of the generalized bow tie appears in the well-studied waterbomb tessellation; see vertices $w_1$ and $w_3$ in the crease pattern of Figure~\ref{fig:waterbomb}(b).  Thus the folding angle equations of this case are already known and were first reported in \cite{Chen}.  The mode 2 MV assignment does not appear in the waterbomb tessellation and seems to be new.  For completeness we include the explicit folding angle equations for both modes in the following theorem.

\begin{theorem}\label{thm:genbowtie}
The generalized bow tie crease pattern in Figure~\ref{fig:genbowtie}(a), rigidly folding with folding angle symmetry $(\rho_1,\rho_1,\rho_2,\rho_1,\rho_1,\rho_2)$, has two rigid folding modes, parameterized by  
\begin{equation}\label{eq:genbowtie}
    \tan\frac{\rho_2}{2} = -\cos\beta \tan\frac{\rho_1}{2}\mbox{ for mode 1 and }
    \tan\frac{\rho_2}{2} = -\frac{1}{1+2\cos\beta}\tan\frac{\rho_1}{2}\mbox{ for mode 2.}
\end{equation}
\end{theorem}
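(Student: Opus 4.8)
The plan is to mimic the proof of the generalized trifold (Theorem~\ref{thm:gentrifold}), exploiting the symmetry of the crease pattern to reduce the product of rotation matrices in Equation~\eqref{eq:R=I} to a single closed loop and then extract a clean algebraic relation from a trace or entry condition. First I would set up coordinates just as in the trifold proof: place the vertex at the origin, let $P$ lie in the $xy$-plane with $c_1$ along the positive $x$-axis, and write each crease as a unit vector. For the bow tie the sector angles are not all equal but the pattern has a horizontal and vertical mirror symmetry (Figure~\ref{fig:genbowtie}(a)), so I would parameterize the sector angles in terms of $\beta$ consistently with that symmetry. The folding-angle symmetry $(\rho_1,\rho_1,\rho_2,\rho_1,\rho_1,\rho_2)$ means the loop $F(\vecrho)=R(c_1,\rho_1)\cdots R(c_6,\rho_6)$ decomposes into two identical halves, so I expect $F(\vecrho)=N^2$ for a half-loop matrix $N$ built from $R_x(\rho_1)$, $R_x(\rho_2)$ and the appropriate $R_z$ sector rotations.

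Because $F(\vecrho)=I_3$ must hold on the configuration space, the half-loop $N$ must satisfy $N^2=I_3$, i.e. $N$ is either the identity or a $180^\circ$ rotation about some axis. By the Rodrigues trace fact used in the trifold proof, a $180^\circ$ rotation has trace $1+2\cos(180^\circ)=-1$. So I would compute $\mathrm{Tr}(N)$ as a function of $\rho_1,\rho_2$ and the sector angles, set it equal to $-1$, and attempt to factor the resulting trigonometric expression — exactly as Equation~\eqref{eq:gentri1} factors — into pieces, one of which captures the genuine bow-tie configuration space and the others being spurious (ruled out by testing the unfolded point $\rho_1=\rho_2=0$, where they take a nonzero constant value). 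The two surviving factors should correspond to the two folding modes, and each should be rewritable, via the standard $\rho_1\mapsto\tan(\rho_1/2)$ Weierstrass substitution and product-to-sum identities, into the linear relations in \eqref{eq:genbowtie}. I expect the half-angle (rather than quarter-angle) substitution to appear here because the bow tie lacks the $120^\circ$ rotational structure that forced the modified $\tan(\rho/4)$ form in the trifold.

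The main obstacle will be the trigonometric bookkeeping: computing $\mathrm{Tr}(N)$ explicitly and then discovering the right factorization that cleanly separates mode 1 from mode 2. As in the trifold case, the factorization "is obtained from a sequence of trigonometric identity manipulations" that is easy to state but nontrivial to find, and here there are two genuine modes rather than one, so the trace condition should yield a product of two nontrivial factors (plus possibly spurious ones). I would verify candidate factors by checking them against the $\beta=60^\circ$ specialization, where the problem reduces to the single bow-tie mode on $(G_{60},P_{60})$ and the two modes in \eqref{eq:genbowtie} should coincide (as noted in the caption of Figure~\ref{fig:genbowtie}(c)); this consistency check both guides and confirms the algebra. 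A secondary concern is justifying the local-to-global extension: since this derivation works directly from the exact loop equation $F(\vecrho)=I_3$ rather than the second-order approximation \eqref{eq:2nd-order}, the relations hold globally on the configuration space by construction, which is precisely the kind of Equation~\eqref{eq:2nd-order}-independent argument the authors call for at the start of Section~\ref{sec4}.
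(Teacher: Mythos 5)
Your route is genuinely different from the paper's: the paper proves Theorem~\ref{thm:genbowtie} not with the loop equation at all, but by the vertex-splitting technique of \cite{Tolman,Zhang-Chen} --- splitting the degree-6 vertex into two flat-foldable degree-4 vertices (bird's feet for mode 1, vertices with sectors $(\beta,2\beta,\pi-\beta,\pi-2\beta)$ for mode 2), arguing kinematic equivalence as the connecting segment shrinks, and reading off the multipliers from Theorem~\ref{thm:deg4}. Your half-loop idea is itself sound as far as it goes: with sectors $(\pi-2\beta,\beta,\beta,\pi-2\beta,\beta,\beta)$ the pattern has $180^\circ$ rotational symmetry and the folding angles repeat with period $3$, so indeed $F(\vecrho)=N^2$ with $N=R_x(\rho_1)R_z(\pi-2\beta)R_x(\rho_1)R_z(\beta)R_x(\rho_2)R_z(\beta)$, and $N^2=I_3$ with $N\neq I_3$ (true near the flat state, where $N=R_z(\pi)$) forces $\mathrm{Tr}(N)=-1$. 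Setting $\mathrm{Tr}(N)=-1$ does recover the mode 1 relation $\tan(\rho_2/2)=-\cos\beta\tan(\rho_1/2)$ (e.g.\ at $\beta=45^\circ$, $\rho_1=1$, $\rho_2=2\arctan(-\cos 45^\circ\tan\tfrac12)$ one gets $\mathrm{Tr}(N)=-1.000$), and such an argument would be exactly the kind of Equation~\eqref{eq:2nd-order}-independent, global proof the paper asks for.

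The genuine gap is your expectation that \emph{both} modes emerge as factors of this one trace equation. They cannot, because the two modes of Theorem~\ref{thm:genbowtie} are not two branches of a single loop equation: they are two \emph{inequivalent placements} of the pattern $(\rho_1,\rho_1,\rho_2,\rho_1,\rho_1,\rho_2)$ on the crease pattern. In mode 1 the antipodal $\rho_2$-pair is the pair of creases flanked on both sides by $\beta$ sectors (the waterbomb assignment); in mode 2 the $\rho_2$-pair is an antipodal pair flanked by a $\beta$ and a $\pi-2\beta$ sector --- this is why Figure~\ref{fig:genbowtie}(a) shows two differently-marked patterns and why the paper performs a \emph{different} split for each mode. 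Algebraically this means mode 2 obeys the trace condition for a different half-loop, $N_2=R_x(\rho_2)R_z(\pi-2\beta)R_x(\rho_1)R_z(\beta)R_x(\rho_1)R_z(\beta)$, where $R_x(\rho_2)$ sits in a different slot; the mode 2 relation $\tan(\rho_2/2)=-\tan(\rho_1/2)/(1+2\cos\beta)$ fails $\mathrm{Tr}(N)=-1$ for your single $N$ (numerically at $\beta=45^\circ$, $\rho_1=1$ it gives $\mathrm{Tr}(N)\approx-0.966$), while it satisfies $\mathrm{Tr}(N_2)=-1$. The trifold analogy misleads you precisely here: in the $3{+}3$ trifold, swapping $\rho_1\leftrightarrow\rho_2$ interchanges the two placements, so one symmetric equation contains both modes; in the $4{+}2$ bow tie the placements are genuinely distinct. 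A secondary structural point: since the trace of a rotation is never below $-1$, your quantity $\mathrm{Tr}(N)+1$ is nonnegative, so its zero locus is a minimum set and the genuine mode appears as a squared factor --- not as one of two transversally vanishing factors as in the trifold's $\mathrm{Tr}(M)=0$ condition. The repair is straightforward: run your argument twice, once for each placement of $R_x(\rho_2)$ in the half-loop, extracting one mode from each trace equation.
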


Note that when $\beta=60^\circ$ both of the equations in \eqref{eq:genbowtie} become the same.

\begin{proof}
The result may be proved using the vertex-splitting technique from \cite{Tolman}, and indeed this is exactly what Zhang and Chen do in \cite{Zhang-Chen} to derive the mode 1 equation.  That is, in the mode 1 case we split the degree-6 bow tie vertex $v$ in the crease pattern $G(v)$ into two flat-foldable degree-4 vertices $v_1$ and $v_2$ in crease pattern $G_1(v_1,v_2)$ as in Figure~\ref{fig:genbowtie}(a). This new 2-vertex crease pattern must be kinematically equivalent to the mode 1 generalized bow tie, since if we shorten the segment $\overline{v_1v_2}$ in $G_1(v_1,v_2)$ the folding angles $\rho_1, \rho_2$ will not change.  Thus in the limit the $\rho_1,\rho_2$ folding angles in $G_1(v_1,v_2)$ must be the same as those in $G(v)$.  Furthermore, the two vertices in $G_1(v_1,v_2)$ are bird's feet and force the creases to fold with the desired bow tie symmetry.
Specifically, Theorem~\ref{thm:deg4} gives us that $\tan(\rho_2/2)=-\cos\beta \tan(\rho_1/2)$.  

In the mode 2 case, we split the vertex $v$ in $G(v)$ along a different line to produce two degree-4 flat-foldable vertices $v_1$ and $v_2$ in $G_2(v_1,v_2)$ with sector angles $(\beta,2\beta,\pi-\beta,\pi-2\beta)$, as shown in Figure~\ref{fig:genbowtie}(a). In $G_2(v_1,v_2)$ the creases bounding the sector angle $\beta$ must have different MV parity, and only one choice for these creases matches the desired bow tie symmetry (the one shown in Figure~\ref{fig:genbowtie}(a)).  This will be kinematically equivalent to the mode 2 rigid folding of $G(v)$, as in the mode 1 case.  In this MV assignment the degree-4 vertices in $G_2(v_1,v_2)$ are folding in mode 2 of Theorem~\ref{thm:deg4}, and therefore we have that  $\tan(\rho_2/2)=-(\sin(\beta/2)/\sin(3\beta/2))\tan(\rho_1/2) = -1/(1+2\cos\beta)\tan(\rho_1/2)$, as desired.
\end{proof}

\subsection{Generalized opposites, (123123)}\label{sec:opp}

The pattern $(123123)$ is the only 3-color bracelet pattern whose corresponding rigid folding symmetry for $G_{60}$ does not reduce to either the trifold, the bow tie, or simply does not rigidly fold.  We generalize this pattern in a way that matches the symmetry $(\rho_1,\rho_2,\rho_3,\rho_1,\rho_2,\rho_3)$, as shown in Figure~\ref{fig:genopp}(a), denoting the crease pattern by $O_{\alpha,\beta}$.  Note that if $\beta=\pi-2\alpha$ then this crease pattern becomes that of the generalized bow tie, and if $\rho_3=\rho_2$ then we get mode 1 of the generalized bow tie and if $\rho_3=\rho_1$ or $\rho_2=\rho_1$ then we get mode 2.   Therefore we would expect that the configuration space of the generalized opposites pattern should contain the configuration spaces of the different modes of the generalized bow tie as subsets.  This immediately suggests that if we consider the generalized opposites configuration space to be points $(\rho_1,\rho_2,\rho_3)\in \mathbb{R}^3$, then ${\cal C}(O_{\alpha,\beta})$ cannot be a 1-dimensional curve, but must be a 2-dimensional surface.  In other words, the kinematics of the generalized opposites must have two DOF.

We verify this by considering the 2nd-order approximation of the rigid folding around the origin, as in Section~\ref{sec2}.  We set
$c_i=\langle \cos\theta_i,\sin \theta_i,0\rangle$ where $\theta_1=0$, $\theta_2=\alpha$, $\theta_3=\alpha+\beta$, $\theta_4=\pi$, $\theta_5=\pi+\alpha$, and $\theta_6=\pi+\alpha+\beta$.
Then the first matrix sum in \eqref{eq:2nd-order} becomes 
$$
\begin{pmatrix}
0 & -A & 0\\
A & 0 & 0\\
0 & 0 & 0
\end{pmatrix}\mbox{ where } A=2(\sin\alpha \rho_1'(0)\rho_2'(0) + \sin\beta \rho_2'(0)\rho_3'(0)+\sin(\alpha+\beta) \rho_1'(0)\rho_3'(0)).$$

Setting this equal to zero and reparameterizing with the Weierstrass substitution gives the  folding angle approximation near the origin
\begin{equation}\label{eq:opp1}
\sin\alpha \tan\frac{\rho_1}{2}\tan\frac{\rho_2}{2}
+\sin\beta \tan\frac{\rho_2}{2}\tan\frac{\rho_3}{2} + \sin(\alpha+\beta)\tan\frac{\rho_1}{2}\tan\frac{\rho_3}{2}=0.
\end{equation}
Surprisingly, Equation~\eqref{eq:opp1} extends to the whole configuration space $-\pi\leq \rho_1,\rho_2,\rho_3\leq\pi$.  That is, we can express any of the $\rho_i$ in terms of the other two and generate rigid folding simulations like those in Figure~\ref{fig:genopp}(b).  Note that a proof of this that does not rely on the 2nd-order approximation is not possible with the vertex-splitting technique of the Section~\ref{sec4}\ref{sec:bowtie} (or the parallel pleat transform of the Section~\ref{sec4}\ref{sec:igloo} below) because reducing $O_{\alpha,\beta}$ to a crease pattern with only degree-4 vertices will necessarily result in a 1-DOF system, and we know that $O_{\alpha,\beta}$ is 2-DOF.  

In Figure~\ref{fig:genopp}(c) we see the configuration space ${\cal C}(O_{\alpha,\beta})$ given by  Equation~\ref{eq:opp1}.  On this surface we have drawn the three different bow tie special cases of $O_{\alpha,\beta}$, as mentioned previously.  Indeed, if we set $\beta=\pi-2\alpha$ in \eqref{eq:opp1} and either $\rho_3=\rho_2$, $\rho_3=\rho_1$, or $\rho_2=\rho_1$ we obtain the folding angle equations for the generalized bow tie in Theorem~\ref{thm:genbowtie}.

\begin{figure}
    \centering
    \includegraphics[width=\linewidth]{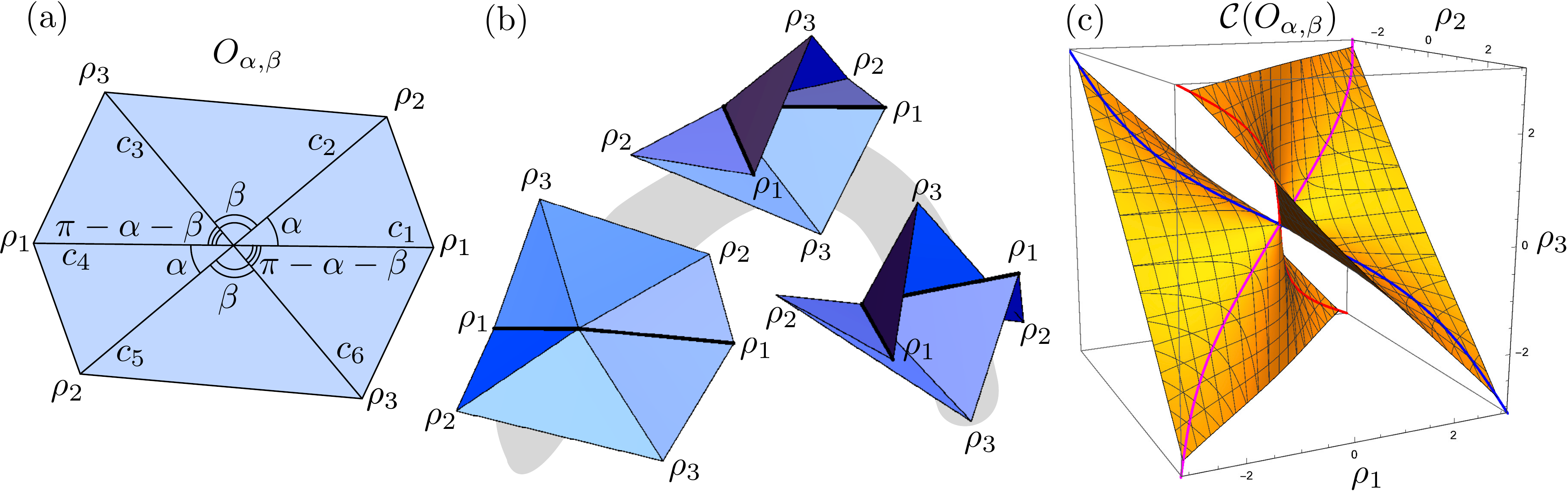}
    \caption{(a) The generalized opposites crease pattern.  (b) Rigid foldings of the opposites pattern.  (c) Configuration space of the opposites pattern with curves (in purple, pink, and red) showing the three ways that the bow tie symmetry is a special case of the opposites symmetry.}
    \label{fig:genopp}
\end{figure}

\subsection{Generalized igloo, (123432)}\label{sec:igloo}

\begin{figure}
    \centering
    \includegraphics[width=\linewidth]{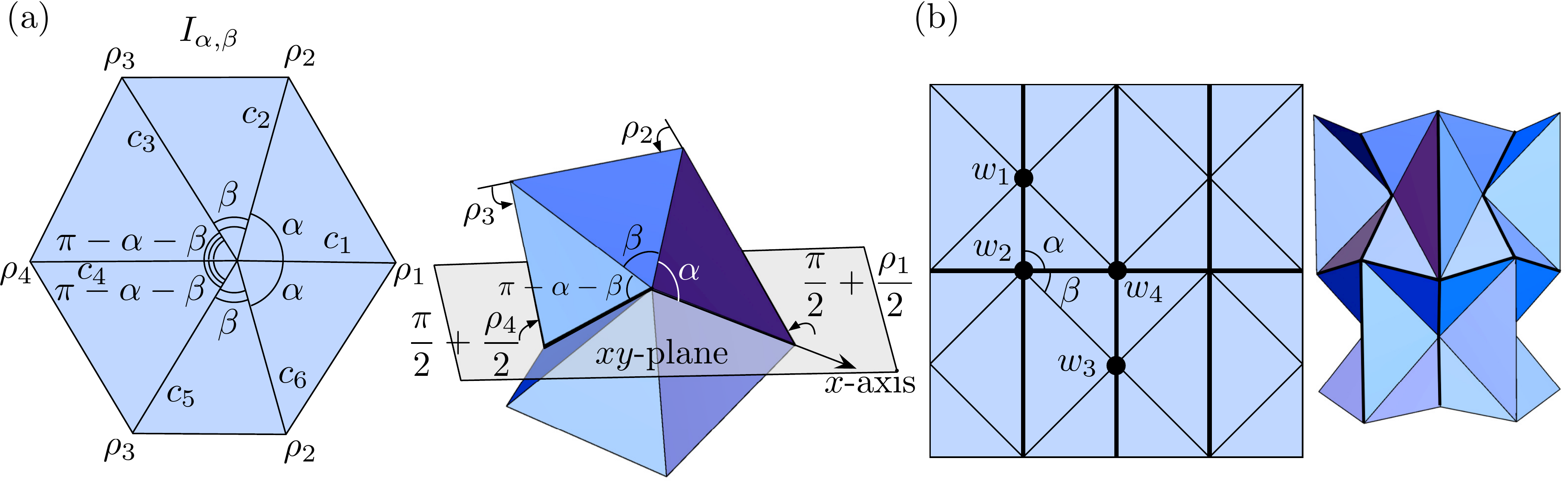}
    \caption{(a) The plane symmetry of a folded $I_{\alpha,\beta}$. (b) The waterbomb tessellation crease pattern and rigid folding simulation.  All the vertices are degree-6 and have either bow tie  ($w_1$ and $w_3$) or igloo symmetry ($w_2$ and $w_4$).}
    \label{fig:waterbomb}
\end{figure}

The igloo from Figure~\ref{fig:symmcases} and Table~\ref{table1} has folding angles of the form $(\rho_1,\rho_2,\rho_3,\rho_4,\rho_3,\rho_2)$, or $(123432)$ bracelet pattern.  To generalize this, we maintain the line of reflection symmetry of this pattern and allow $\alpha$ to be the sector angles between creases $c_1,c_2$ and $c_1,c_6$ and $\beta$ to be the angle between $c_2,c_3$ and $c_5,c_6$.  We call this crease pattern $I_{\alpha,\beta}$; see Figure~\ref{fig:waterbomb}(a). 

Like the bow tie, the igloo symmetry pattern appears in the waterbomb tessellation pattern, as seen at vertices $w_2$ and $w_4$ in Figure~\ref{fig:waterbomb}(b).  Explicit folding angle equations for this waterbomb tessellation instance of the igloo pattern, even when generalized to $I_{\alpha,\beta}$, can be found in \cite{Chen}.  However, this does not tell the whole story of $I_{\alpha,\beta}$.  As stated in \cite{Chen}, when the waterbomb tessellation is folded symmetrically, so that the bow tie and igloo vertices rigidly fold symmetrically, then its rigid folding is only 1-DOF.  But the generalized igloo $I_{\alpha,\beta}$ on its own is a 2-DOF rigid origami.
We capture this, as well as equations for the whole configuration space of $I_{\alpha,\beta}$ in the following theorem.

\begin{theorem}\label{thm:iglooconfig}
The symmetric degree-6 crease pattern $I_{\alpha,\beta}$ is a 2-DOF rigid origami, and the configuration space ${\cal C}(I_{\alpha,\beta})$ is given by
\begin{equation}\label{eq:iglooconfig1}
\tan\frac{\rho_1}{2} =
\frac{\splitfrac
{\sin\alpha\sin\beta\sin(\alpha+\beta)\cos\rho_3
+\cos\beta(\sin\alpha\cos(\alpha+\beta)-\cos\alpha\sin(\alpha+\beta)\cos\rho_2\cos\rho_3)}
{+\cos\alpha(\sin\beta\cos(\alpha+\beta)\cos\rho_2+\sin(\alpha+\beta)\sin\rho_2\sin\rho_3)}
}
{\sin\beta\cos(\alpha+\beta)\sin\rho_2 - \sin(\alpha+\beta)(\cos\beta\sin\rho_2\cos\rho_3+\cos\rho_2\sin\rho_3)}
\end{equation}
and
\begin{equation}\label{eq:iglooconfig2}
\tan\frac{\rho_4}{2} =
\frac{\splitfrac
{\cos^2\alpha \sin(2(\alpha+\beta))\sin^2\frac{\rho_3}{2} 
+\sin\alpha \cos(\alpha+\beta)(2\sin\alpha \sin(\alpha+\beta) \cos\rho_2 \sin^2\frac{\rho_3}{2} }
{- \sin\alpha\cos\alpha(\cos\rho_2-1)(\cos^2(\alpha+\beta)\cos\rho_3 + \sin^2(\alpha+\beta))
+ \sin\rho_2 \sin\rho_3)}
}
{\sin\alpha \sin\rho_2 \cos\rho_3 (\sin\alpha \cos\beta \cos\rho_2 + \cos\alpha \sin\beta)\sin\rho_3}.
\end{equation}
\end{theorem}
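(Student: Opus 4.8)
My plan is to work with the exact closure condition $F(\vecrho)=I_3$ from \eqref{eq:R=I} rather than its second-order approximation \eqref{eq:2nd-order}, so that the resulting formulas are global by construction (as the paper promises for cases where a local-to-global proof is available). Following the matrix set-up used in the proof of Theorem~\ref{thm:gentrifold}, I place the vertex at the origin with $c_1$ along the positive $x$-axis and the paper in the $xy$-plane, impose the igloo symmetry $\rho_5=\rho_3$, $\rho_6=\rho_2$, and write the loop of fold rotations $R_x(\rho_i)$ alternating with sector rotations $R_z(\cdot)$ as
\[ R_x(\rho_1)R_z(\alpha)R_x(\rho_2)R_z(\beta)R_x(\rho_3)R_z(\gamma)R_x(\rho_4)R_z(\gamma)R_x(\rho_3)R_z(\beta)R_x(\rho_2)R_z(\alpha)=I_3, \]
where $\gamma=\pi-\alpha-\beta$ is the sector angle flanking $c_4$. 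Abbreviating the ``first half'' as $C=R_z(\alpha)R_x(\rho_2)R_z(\beta)R_x(\rho_3)R_z(\gamma)$, the closure reads $R_x(\rho_1)\,C\,R_x(\rho_4)\,B=I_3$, where $B$ is the ``second half.''

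The key step is to eliminate $B$ using the reflection symmetry. Since the folded state is symmetric across the plane containing the images of $c_1$ and $c_4$, the relevant reflection is $S=\mathrm{diag}(1,-1,1)$, and conjugation by $S$ sends $R_x(\theta)\mapsto R_x(-\theta)$ and $R_z(\theta)\mapsto R_z(-\theta)$. A direct check then gives the identity $B=SC^{-1}S$, so the closure collapses to $R_x(\rho_1)\,C\,R_x(\rho_4)\,S\,C^{-1}S=I_3$, i.e.
\[ C\,\bigl(R_x(\rho_4)S\bigr)\,C^{-1}=S\,R_x(\rho_1). \]
Both sides are reflections (improper orthogonal matrices with spectrum $1,1,-1$), so the identity is equivalent to their unit normal vectors being parallel. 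Computing these normals (the $-1$-eigenvectors) via half-angle formulas reduces the entire closure condition to the single clean vector equation
\[ C\begin{pmatrix}0\\ \cos\tfrac{\rho_4}{2}\\ \sin\tfrac{\rho_4}{2}\end{pmatrix}=\pm\begin{pmatrix}0\\ \cos\tfrac{\rho_1}{2}\\ -\sin\tfrac{\rho_1}{2}\end{pmatrix}. \]

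Reading off coordinates now isolates both unknown angles. The first coordinate forces $C_{12}\cos\tfrac{\rho_4}{2}+C_{13}\sin\tfrac{\rho_4}{2}=0$, hence $\tan\tfrac{\rho_4}{2}=-C_{12}/C_{13}$, which after substituting $\gamma=\pi-\alpha-\beta$ and expanding $C$ should yield \eqref{eq:iglooconfig2}. Taking the ratio of the remaining two coordinates expresses $\tan\tfrac{\rho_1}{2}$ as a ratio of $2\times2$ minors of $C$; because $C\in SO(3)$, the identity $\mathrm{adj}(C)=C^{\mathsf T}$ collapses these minors to single entries, giving $\tan\tfrac{\rho_1}{2}=C_{21}/C_{31}$, whose expansion should produce \eqref{eq:iglooconfig1}. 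The $2$-DOF claim then follows cleanly: $\rho_2,\rho_3$ remain free, both sides of the vector equation are unit vectors (so one of its three scalar equations is redundant), and the monotonicity of $\tan(\cdot/2)$ on $(-\pi,\pi)$ pins down $\rho_1,\rho_4$ uniquely, so $\mathcal{C}(I_{\alpha,\beta})$ is the $2$-dimensional graph over $(\rho_2,\rho_3)$. It is worth noting that, unlike the opposites case of Section~\ref{sec4}\ref{sec:opp}, the reflection symmetry here permits a direct, vertex-splitting-free derivation even though the vertex is $2$-DOF.

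I expect the main obstacle to be purely computational: expanding the five-factor product $C$ and massaging the entries $C_{12},C_{13},C_{21},C_{31}$ into the specific trigonometric closed forms of \eqref{eq:iglooconfig1} and \eqref{eq:iglooconfig2} is a lengthy sequence of product-to-sum and half-angle manipulations that I would verify with a computer algebra system. Two smaller points also require care: confirming that the $\pm$ ambiguity in the normal vectors cancels in the tangent ratios (so it introduces no spurious branches), and recording the ranges of $(\rho_2,\rho_3)$ on which the expressions stay real and correspond to self-intersection-free folded states.
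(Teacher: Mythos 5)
Your derivation is correct, but it takes a genuinely different route from the paper's proof. The paper never manipulates the full closure product: it imports from \cite{Chen} the geometric fact that every igloo-symmetric folding is reflection-symmetric about the plane containing $c_1$ and $c_4$, takes that mirror plane as the $xy$-plane, folds only the half of the paper from $c_1$ through $c_2,c_3$ to $c_4$ (lifting the first sector by $\pi/2+\rho_1/2$, so that the mirror bisects the dihedral angle at $c_1$), and demands that the image of $c_4$ have zero $z$-coordinate; that single scalar condition is \eqref{eq:iglooprod2}, which rearranges to \eqref{eq:iglooconfig1}, and a second, flipped computation (fixing $c_4$ instead of $c_1$) gives \eqref{eq:iglooconfig2}. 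Its $2$-DOF claim is proved separately by a spherical-linkage argument (intersecting two circles on the unit sphere to locate $c_4$). You instead keep the exact loop equation, notice the purely formal identity $B=SC^{-1}S$ --- which in effect \emph{derives} the mirror symmetry from the folding-angle symmetry rather than citing it --- and reduce closure to equality of two reflections, reading both folding-angle formulas and the DOF count off a single vector equation, with $\mathrm{adj}(C)=C^{\mathsf T}$ collapsing the $2\times 2$ minors to single entries. Your approach is self-contained, manifestly global, and treats $\rho_1$ and $\rho_4$ uniformly; the paper's approach keeps the matrix products shorter, stays close to the geometry, and produces the polynomial identity \eqref{eq:iglooprod2}, which it actually uses for plotting precisely because it has no singularities. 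Your deferral of the final expansion to a CAS is not a real gap: expanding $C=R_z(\alpha)R_x(\rho_2)R_z(\beta)R_x(\rho_3)R_z(\pi-\alpha-\beta)$ by hand, the entries $C_{21}$ and $C_{31}$ are exactly the negatives of the numerator and denominator of \eqref{eq:iglooconfig1}, so $\tan(\rho_1/2)=C_{21}/C_{31}$ reproduces the printed equation verbatim, and $\tan(\rho_4/2)=-C_{12}/C_{13}$ plays the corresponding role for \eqref{eq:iglooconfig2}. Two caveats you should record: at configurations where $C_{13}=0$ or $C_{31}=0$ (e.g.\ the unfolded state $\rho_2=\rho_3=0$, where $C=R_z(\pi)$) the tangent ratios are indeterminate --- the same degeneracy afflicts the paper's ratios, which is why the authors fall back on \eqref{eq:iglooprod2} to draw Figure~\ref{fig:iglooconfig} --- so your description of ${\cal C}(I_{\alpha,\beta})$ as a graph over $(\rho_2,\rho_3)$ must exclude such points; and, as you note yourself, the closure algebra alone does not encode the self-intersection constraint that the configuration space definition requires.
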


\begin{proof}
To see that the generalized igloo is a 2-DOF rigid origami, imagine that we place the vertex of $I_{\alpha,\beta}$ at the center of a unit radius sphere $S$  so that the folded origami intersects $S$ at a closed spherical linkage. Fix the sector of $I_{\alpha,\beta}$ between creases $c_1, c_2$ and choose any $\rho_1,\rho_2\in [-\pi,\pi]$.  This will determine the position of the creases $c_3$ and $c_5$, and if we draw two circles of radii $\pi-\alpha-\beta$ at these creases' endpoints on $S$, then the intersection of these circles will determine either 0, 1, or 2 possible locations for the crease $c_4$.  Thus any pair $(\rho_1,\rho_2)$ will determine a rigid folding of $I_{\alpha,\beta}$, and any rigid folding of $I_{\alpha,\beta}$ will give us such a pair $(\rho_1,\rho_2)$, making this a 2-DOF system.

As noted in \cite{Chen}, all rigid foldings of $I_{\alpha,\beta}$ are reflection-symmetric about the plane that contains the creases $c_1$ and $c_4$.  We let this plane be the $xy$-plane and position the crease $c_1$ to be on the positive $x$-axis, as shown in Figure~\ref{fig:waterbomb}(a).  Then, keeping $c_1$ fixed, we imagine folding the sectors of paper between the creases $c_1$, $c_2$, $c_3$, and $c_4$.  To replicate the folding angle $\rho_1$ at $c_1$, the $c_1c_2$ sector would have to lift from the $xy$-plane by an angle of $\pi/2+\rho_1/2$, while the folding angles at $c_2$ and $c_3$ will simply be $\rho_2$ and $\rho_3$, respectively.  We let $c_4=\langle -1,0,0\rangle$. Then the image of $c_4$ after this folding will be
\begin{equation}\label{eq:iglooprod1}
R_x(\frac{\pi}{2}+\frac{\rho_1}{2})R_z(\alpha)R_x(\rho_2)R_z(-\alpha)R_z(\alpha+\beta)R_x(\rho_3)R_z(-(\alpha+\beta))c_4.
\end{equation}
In order for the folding of $I_{\alpha,\beta}$ to be symmetric about the $xy$-plane, we must have that the $z$-coordinate of~\eqref{eq:iglooprod1} is zero.  This simplifies to
\begin{equation}\label{eq:iglooprod2}
\begin{multlined}
\cos\frac{\rho_1}{2}(
\cos\alpha(\cos\beta\cos\rho_2\cos\rho_3 - \sin\rho_2\sin\rho_3) 
-\sin\alpha\sin\beta\sin(\alpha+\beta)\cos\rho_3 \\
- \cos(\alpha+\beta)((\sin\alpha\cos\beta+\cos\alpha\sin\beta\cos\rho_2)) ) \\
= \sin\frac{\rho_1}{2}(
 \sin(\alpha+\beta) (\cos\beta\sin\rho_2\cos\rho_3+\cos\rho_2\sin\rho_3)
-\sin\beta\cos(\alpha+\beta)\sin\rho_2).
\end{multlined}
\end{equation}
Isolating the $\rho_1$ terms in~\eqref{eq:iglooprod2} to create $\tan(\rho_1/2)$ and simplifying a bit further gives~\eqref{eq:iglooconfig1}.  Performing the same computation on $I_{\alpha,\beta}$ except flipped so that the crease $c_4$ is fixed to the positive $x$-axis gives an equation on $(\rho_2,\rho_3,\rho_4)$, which is~\eqref{eq:iglooconfig2}.
\end{proof}

\begin{figure}
    \centering
    \includegraphics[width=5in]{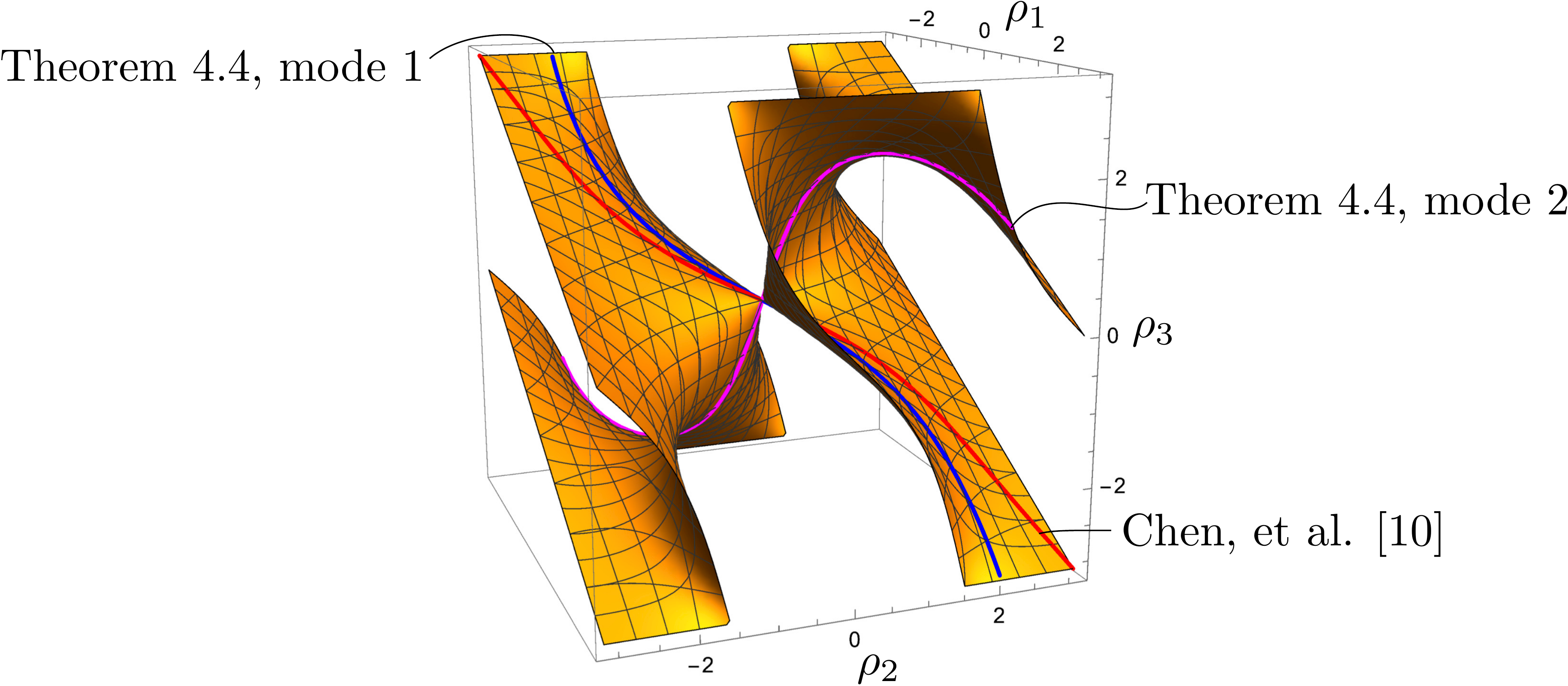}
    \caption{The configuration space ${\cal C}(I_{60^\circ, 60^\circ})$ in $(\rho_1,\rho_2,\rho_3)$ parameter space. The blue and pink curves are the mode 1 and mode 2 curves from Theorem~\ref{thm:genigloo}, respectively, while the red curve is the rigid folding of $I_{\alpha,\beta}$ in a symmetric folding of the waterbomb tessellation derived by Chen, et al., 2016  (equations (2.4a) and (2.4b) in \cite{Chen}).}\label{fig:iglooconfig}
\end{figure}

The equations \eqref{eq:iglooconfig1} and \eqref{eq:iglooconfig2} can be used to create 2-DOF rigid folding simulations of $I_{\alpha,\beta}$.  The configuration space ${\cal C}(I_{\alpha,\beta})$ for $\alpha=\beta=60^\circ$ that we obtain from these equations in the $(\rho_1,\rho_2,\rho_3)$ parameter space is shown in Figure~\ref{fig:iglooconfig} (we actually used the singularity-free Equation~\eqref{eq:iglooprod2} to compute this image).

\begin{figure}
    \centering
    \includegraphics[width=\linewidth]{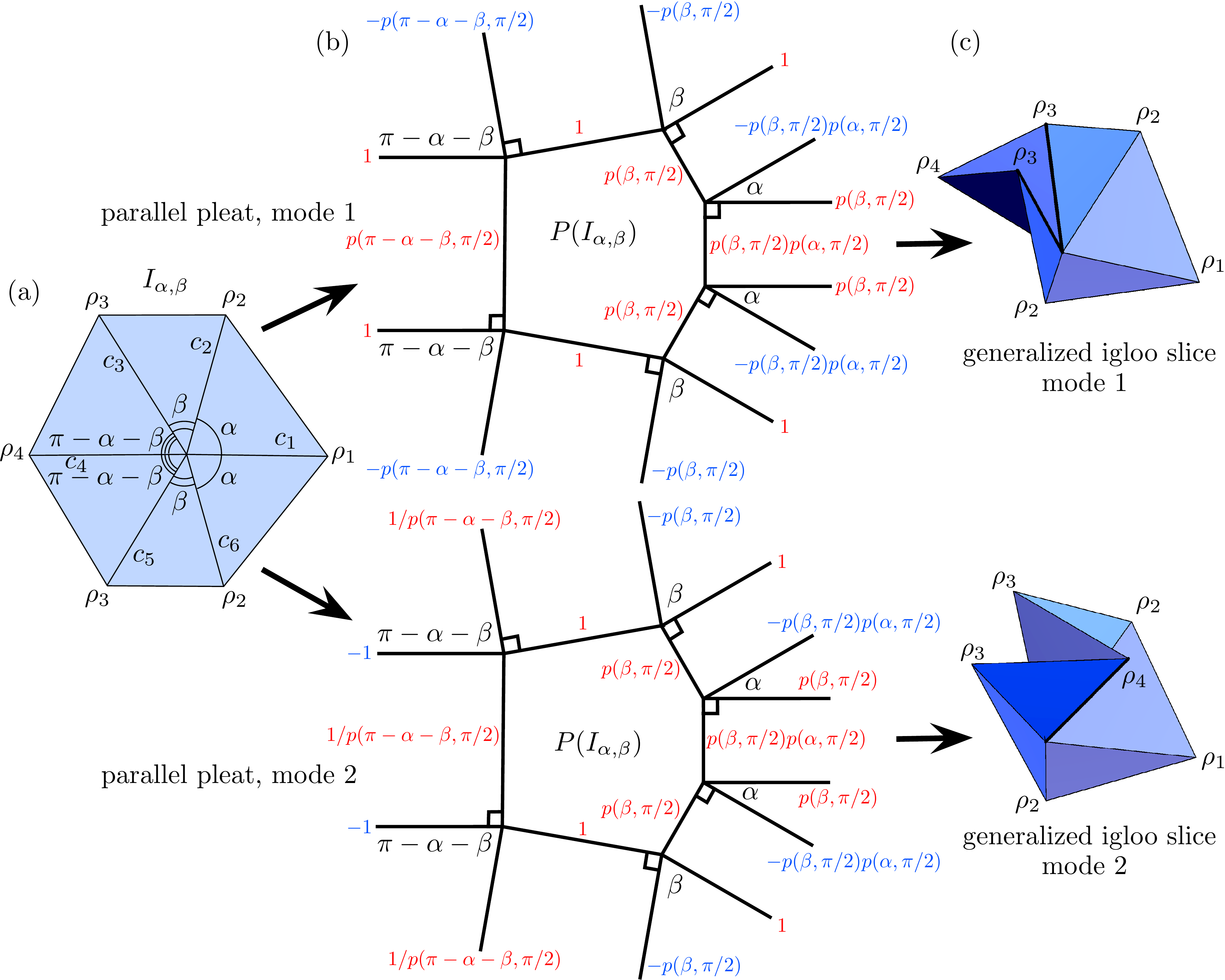}
    \caption{(a) The generalized igloo crease pattern.  (b) The parallel pleat transform applied to the igloo crease pattern, with creases labeled with their folding angle multipliers for modes 1 and 2 (red multipliers are mountain and blue are valley creases).  (c) Illustrations of rigid folding modes 1 and 2 of the generalized igloo.}
    \label{fig:genigloo}
\end{figure}

We see in Theorem~\ref{thm:iglooconfig} a glimmer of the Weierstrass substitution in the 2-DOF system $I_{\alpha,\beta}$, although it is not as apparent or elegant as in the generalized opposites pattern $O_{\alpha,\beta}$.  However, several 1-DOF slices of the configuration space ${\cal C}(I_{\alpha,\beta})$ can be found that show instances where the Weierstrass substitution, like the degree-4 and trifold cases, reveals linear relationships between some folding angles.

One way to find such 1-DOF slices is to use the \textit{parallel pleat transform} from \cite{HullTachi1} (also known as the \textit{double line method}).  This method takes as input a rigidly-foldable crease pattern $C$ and creates a new crease pattern $P(C)$ where each crease $c$ of $C$ is replaced in $P(C)$ by a pair of parallel creases $p_1(c)$, $p_1(c)$ (a pleat) and the vertices of degree $n$ in $C$ are replaced by an $n$-sided polygon in $P(C)$.  This is done in such a way that the vertices in $P(C)$ are all degree-4 and flat-foldable, so that the folding angle equations of Theorem~\ref{thm:deg4} can be used to rigidly fold $P(C)$.  Thus, the folding angle of $c$ in $C$ will equal the sum of the folding angles of $p_1(c)$ and $p_2(c)$ in $P(C)$.  

This parallel pleat transform has additional constraints; see \cite{HullTachi1} for more details.  But in the case of the generalized igloo it works very well and is shown in Figure~\ref{fig:genigloo}(b), where the folding angle multipliers of the creases in $P(I_{\alpha,\beta})$ are shown and lead to two different modes for this crease pattern (one where the "igloo door" pops in and one where it pops out). These multipliers along with Theorem~\ref{thm:deg4} give us folding angle relationships for $\rho_1,\ldots,\rho_4$ as shown in the following Theorem.

\begin{theorem}\label{thm:genigloo}
The generalized igloo crease pattern $I_{\alpha,\beta}$ in Figure~\ref{fig:genigloo}, rigidly folding with symmetry $(\rho_1,\rho_2,\rho_3,\rho_4,\rho_3,\rho_2)$, has two rigid folding modes parameterized by
$$ 
    \tan\frac{\rho_1}{4}  = \frac{1-\tan\frac{\beta}{2}}{1+\tan\frac{\beta}{2}} \tan\frac{\rho_4}{4}, \ \
    \tan\left(\frac{\rho_2}{2}-\frac{\rho_4}{4}\right)  = - \frac{1-\tan\frac{\alpha}{2}}{1+\tan\frac{\alpha}{2}}
    \frac{1-\tan\frac{\beta}{2}}{1+\tan\frac{\beta}{2}} \tan\frac{\rho_4}{4}$$
$$    \tan\frac{\rho_3}{2}  = -\frac{\sin\frac{\alpha}{2} \sin\frac{\rho_4}{2}}
    {\cos\frac{\alpha}{2}+\cos\frac{\rho_4}{2}\sin\left(\frac{\alpha}{2}+\beta\right)}
$$
for mode 1,  and for mode 2
$$ 
    \tan\frac{\rho_1}{4}  = \frac{1-\tan\frac{\beta}{2}}{1+\tan\frac{\beta}{2}} \tan\frac{\rho_4}{4}, \ \
    \tan\left(\frac{\rho_2}{2}-\frac{\rho_4}{4}\right)  =  -\frac{1-\tan\frac{\alpha}{2}}{1+\tan\frac{\alpha}{2}}
    \frac{1-\tan\frac{\beta}{2}}{1+\tan\frac{\beta}{2}} \tan\frac{\rho_4}{4}$$
$$    \tan\frac{\rho_3}{2}  = \frac{2\cos\frac{\alpha}{2}\sin\frac{\rho_4}{2}}
    {\cos\left(\frac{\alpha}{2}+\beta+\frac{\rho_4}{2}\right)+\cos\left(\frac{\alpha}{2}+\beta-\frac{\rho_4}{2}\right)-2\sin\frac{\alpha}{2}}.
$$
\end{theorem}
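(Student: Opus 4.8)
The plan is to prove the theorem by applying the parallel pleat transform to $I_{\alpha,\beta}$, as set up in the discussion preceding the statement, and then reading folding angle relationships off Theorem~\ref{thm:deg4} at each of the resulting degree-4 flat-foldable vertices. First I would make the transform explicit: replace each crease $c_i$ by a parallel pleat $p_1(c_i), p_2(c_i)$ and the degree-6 vertex by a hexagonal face, recording the sector angles created at each new degree-4 vertex of $P(I_{\alpha,\beta})$ in terms of $\alpha$ and $\beta$. The key structural input is that, by the reflective symmetry of $I_{\alpha,\beta}$ across the plane containing $c_1$ and $c_4$ (established in the proof of Theorem~\ref{thm:iglooconfig}), the creases $c_1$ and $c_4$ lie in the mirror plane, so their two pleat creases are exchanged by the reflection and therefore fold by equal angles; hence each folds by half of $\rho_1$ (resp.\ $\rho_4$). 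This is exactly what produces the quarter-angle tangents $\tan(\rho_1/4)$ and $\tan(\rho_4/4)$ in the conclusion, since Theorem~\ref{thm:deg4} relates \emph{half}-angles of pleat creases, each of which is $\rho_1/2$ or $\rho_4/2$ here.

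Next I would apply Theorem~\ref{thm:deg4} at each degree-4 vertex around the hexagon, using the mountain/valley assignment shown in Figure~\ref{fig:genigloo}(b) to decide whether the mode-1 multiplier $p$ or the mode-2 multiplier $q$ governs that vertex; the two global rigid folding modes of $I_{\alpha,\beta}$ (the ``igloo door'' popping in versus out) arise from the two admissible MV parities on the pleated pattern. I expect the relations for $\rho_1$ and for $\rho_2 - \rho_4/2$ to follow fairly directly: the multiplier $\frac{1-\tan(\beta/2)}{1+\tan(\beta/2)}$ is precisely $p(\pi/2,\beta)$, and its companion $\frac{1-\tan(\alpha/2)}{1+\tan(\alpha/2)}$ is $p(\pi/2,\alpha)$, consistent with right angles appearing among the pleat sector angles. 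Composing the vertex relations along the chain of pleats and summing pleat-crease angles to recover each $\rho_i$ should then yield the stated $\tan(\rho_1/4)$ and $\tan(\rho_2/2 - \rho_4/4)$ equations after the Weierstrass reparameterization, the shifted argument $\rho_2/2-\rho_4/4$ reflecting that the pleat creases of the off-mirror crease $c_2$ do not fold equally and must be combined through a shared intermediate angle.

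The $\rho_3$ relation is where I expect the real work to be. Unlike the $\rho_1$ and $\rho_2$ equations, the stated $\rho_3$ formulas are not linear in the Weierstrass variables; they involve $\sin(\rho_4/2)$ and $\cos(\rho_4/2)$ nonlinearly, which signals that $\rho_3$ cannot come from a single application of Theorem~\ref{thm:deg4} but must be obtained by composing the multiplier relations at two adjacent degree-4 vertices and eliminating the intermediate pleat-crease angle. The main obstacle will therefore be carrying out this elimination and the ensuing trigonometric simplification cleanly enough to land on the compact closed forms stated for each mode, in particular verifying that the denominators $\cos(\alpha/2) + \cos(\rho_4/2)\sin(\alpha/2 + \beta)$ (mode 1) and the product-to-sum expression (mode 2) are exactly what the composition produces. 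As an independent check not reliant on the 2nd-order approximation, I would substitute the resulting mode relations into the singularity-free configuration-space equation~\eqref{eq:iglooconfig2} from Theorem~\ref{thm:iglooconfig} and confirm consistency.
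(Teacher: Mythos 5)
Your proposal follows essentially the same route as the paper's proof: apply the parallel pleat transform with pleats perpendicular to the central polygon, use Theorem~\ref{thm:deg4} multipliers of the form $p(x,\pi/2)$ at each degree-4 corner, exploit the mirror symmetry so that the pleat creases of $c_1$ and $c_4$ each fold by half of $\rho_1$, $\rho_4$ (the source of the quarter-angle tangents), and recover $\rho_2$ and $\rho_3$ by summing pleat-crease angles, with $\rho_3$ indeed arising as a sum of two arctangent terms carrying different multipliers, which is exactly why it resists a single linear Weierstrass form. The only additions beyond the paper's argument are your symmetry justification for the equal pleat splitting (the paper simply chooses multiplier $1$ on the $c_4$ pleats) and the final consistency check against Equation~\eqref{eq:iglooconfig2}, both of which are harmless.
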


\begin{proof}
In the parallel pleat transform, we chose our pleats in $P(I_{\alpha,\beta})$ to be perpendicular to the central polygon.  This means that the folding angle multipliers of the degree-4 flat-foldable vertices in $P(I_{\alpha,\beta})$ will all be of the form $p(x,\pi/2)$ for $x\in\{\alpha,\beta,\pi-\alpha-\beta\}$.  See Figure~\ref{fig:genigloo}(b) for reference.

In mode 1, we choose the parallel pleats made from $c_4$ to have folding multiplier 1, so if $s$ is the folding angle of these two creases in $P(I_{\alpha,\beta})$, we have that $\rho_4=2s$.  Next, the parallel pleats made from crease $c_1$ in $I_{\alpha,\beta}$ have, according to Theorem~\ref{thm:deg4}, folding angles $2\arctan(p(\beta,\pi/2)\tan(s/2))$.  Therefore, 
$$\rho_1= 4\arctan(p(\beta,\pi/2)\tan\frac{\rho_4}{4}\ \Rightarrow \tan\frac{\rho_1}{4} = p(\beta,\pi/2)\tan\frac{\rho_4}{4},$$
as desired.  For crease $c_2$ we obtain $\rho_2 = s -2\arctan(p(\beta,\pi/2)p(\alpha,\pi/2)\tan(s/2))$, and for crease $c_3$ we have $\rho_3 = -2\arctan(p(\beta,\pi/2)\tan(s/2)) - 2\arctan(p(\pi-\alpha-\beta,\pi/2)\tan(s/2))$. When simplified, these produce the mode 1 equations as stated in the Theorem.  The equations for mode 2 follow similarly.
\end{proof}

\begin{figure}
    \centering
    \includegraphics[width=\linewidth]{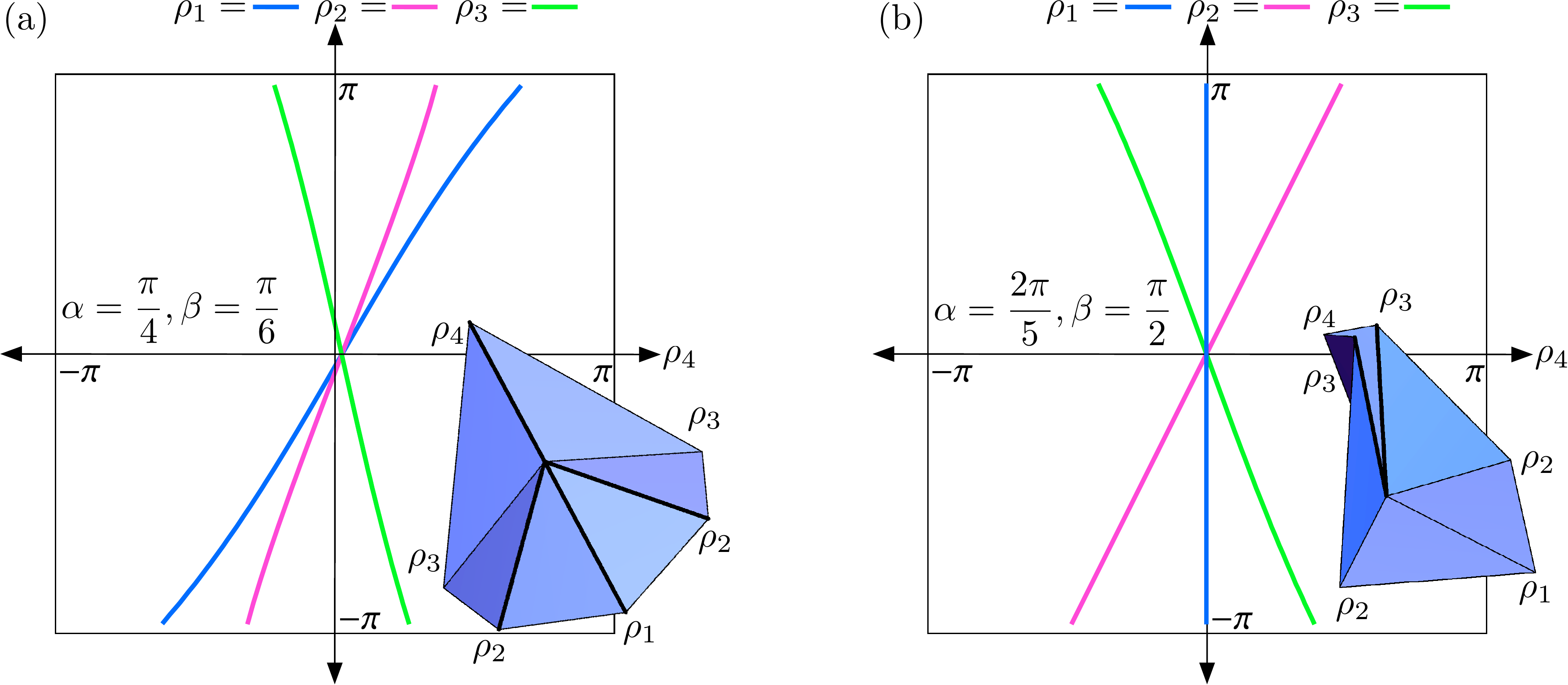}
    \caption{Generalized igloo example configuration spaces in mode 1 with (a)  $\alpha=\pi/4$, $\beta=\pi/6$ and  (b) $\alpha=2\pi/5$, $\beta=\pi/2$. }
    \label{fig:iglooconfig}
\end{figure}

A few notes:
\begin{enumerate}
    \item As noted in \cite{HullTachi1}, the parallel pleat transform offers an explanation for why sometimes the modified Weierstrass substitution $\tan(\rho_i/4)$ appears in rigid folding angle equations, as it arises naturally when adding the two folding angles of a pleat that are parameterized with $2\arctan(k \tan(t_i/2))$ for some constant $k$ and where $\rho_i=2t_i$.

    \item The folding angles $\rho_1$, $\rho_2$, and $\rho_3$ are all expressed in terms of $\rho_4$ in Theorem~\ref{thm:genigloo}, and thus the configuration space of this case, which is four-dimensional, can be visualized by $(\rho_4,\rho_i)$ slices, as shown in Figure~\ref{fig:iglooconfig}.
    
    \item While parameterizations with Weierstrass substitutions $\rho_i/2$ and $\rho_i/4$ are successful  in Theorem~\ref{thm:genigloo} for $(\rho_4,\rho_1)$ and $(\rho_4,\rho_2)$, it does not seem to be a successful way to completely express the $(\rho_4,\rho_3)$ relationship (although the equations given look tantalizingly close).
        
    \item When $\beta=\pi/2$ the first equation in modes 1 and 2 reduces to $\rho_1=0$, meaning that crease $c_1$ is never folded and so $I_{\alpha,\pi/2}$ is actually a rigid folding vertex of degree 5, as shown in Figure~\ref{fig:iglooconfig}(b).  Also in this case the second equation in Theorem~\ref{thm:genigloo} becomes $\rho_2=\rho_4/2$ in mode 1 and $\rho_2=-\rho_4/2$ in mode 2, giving a direct linear relationship between these folding angles.  (This also happens in the case where $\alpha=\pi/2$.)
\end{enumerate}

The curves that modes 1 and 2 in Theorem~\ref{thm:genigloo} make in the configuration space ${\cal C}(I_{\alpha,\beta})$ are shown (in blue and pink, respectively) in Figure~\ref{fig:iglooconfig}.  Also in this Figure we show in red a different 1-DOF curve discovered by Chen et al. in \cite{Chen} that captures the kinematics of $I_{\alpha,\beta}$ in a symmetric folding of the waterbomb tessellation (specifically, equations (2.4a) and (2.4b) in \cite{Chen}, which also demonstrate the prevalence of the Weierstrass substitution).

\begin{figure}
    \centering
    \includegraphics[width=\linewidth]{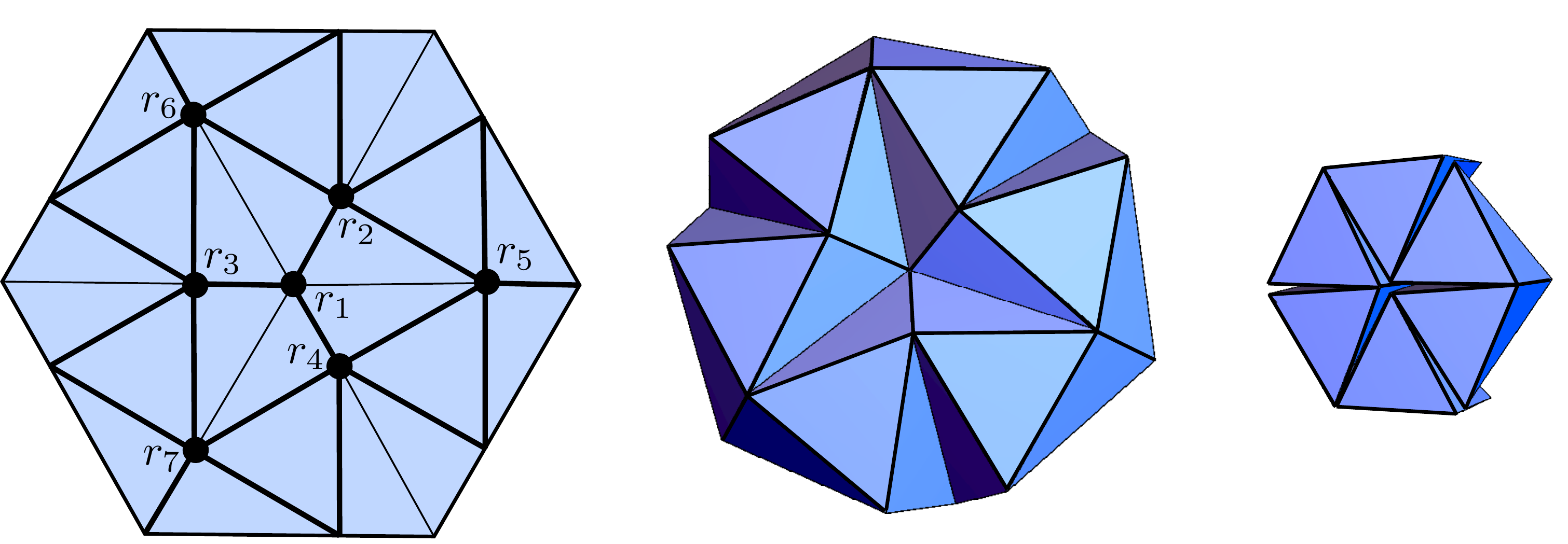}
    \caption{A rigid folding simulation of the Resch triangle twist tessellation using a combination of trifold and igloo equations. }
    \label{fig:Resch}
\end{figure}

\begin{example}[Resch triangle twist tessellation]\label{ex:Resch}
As an example of how the equations we've seen thus far can be applied to a larger crease pattern, we consider the triangle twist tessellation shown in Figure~\ref{fig:Resch}. This was designed by Ron Resch in the 1960s \cite{Resch,Resch2} and has been studied extensively for rigid folding and metamaterial purposes \cite{Goran,Tachi3,FufuYang}.  

As a rigid folding, the Resch pattern has many degrees of  freedom \cite{FufuYang}.  But if we insist on folding it with $120^\circ$ rotational symmetry about the center vertex $r_1$, then it becomes 1-DOF.  The vertex $r_1$ is a trifold rigid folding with $\beta=60^\circ$, and thus its rigid folding follows  (say, in mode 1 of) Equation~\eqref{eq:trifold}.  The folding angles for vertices $r_2$-$r_4$ can then be given by the 1-DOF generalized igloo mode 2 equations in Theorem~\ref{thm:genigloo}.  This provides the folding angle inputs needed to determine the folding angles for vertices $r_5$-$r_7$ using the 2-DOF generalized igloo Equations~\eqref{eq:iglooconfig1} and \eqref{eq:iglooconfig2} in Theorem~\ref{thm:iglooconfig}.  
\end{example}

\subsection{Two pair, (112234)}\label{sec:2pair}

The $(\rho_1,\rho_1,\rho_2,\rho_2,\rho_3,\rho_4)$ symmetry for $G_{60}$, which we call the \textit{two pair} case, has folding angle relationships that are more difficult to characterize than the previous cases considered.  The vertex-splitting and parallel pleat techniques do not work in this case because the $(112234)$ pattern lacks rotational and reflection symmetry.  That is, splitting the vertex into two flat-foldable degree-4 vertices will not work because there is no way to do this while separating the two creases with folding angle $\rho_1$ \text{and} the two creases with folding angle $\rho_2$; both need to be separated because flat-foldable degree-4 vertices cannot have consecutive creases with equal folding angles while rigidly folding, by Theorem~\ref{thm:deg4}.  
The parallel pleat transform requires the vertex to either have a line of reflection symmetry or certain types of rotation symmetry in order for the folding angle multipliers of Theorem~\ref{thm:deg4} to be consistent across the vertices of the parallel pleat crease pattern.  The symmetry $(112234)$ has neither, and thus this technique will not work for $(112234)$ symmetry.

However, parts of the matrix product in \eqref{eq:R=I} can be used to describe the configuration space for the $(112234)$ case, as we will see in what follows.

\begin{theorem}\label{thm:112234}
The two pair $(\rho_1,\rho_1,\rho_2,\rho_2,\rho_3,\rho_4)$  symmetric rigid folding for $G_{60}$ is a 1-DOF system whose configuration space is described by the following equations:
\begin{equation}\label{eq:2pair5}
\begin{multlined}
(1+3\cos(2\rho_2))\cos\rho_3 + 3(\cos\rho_1-1)\sin\rho_1 \sin\rho_4   \\
=(1+3\cos(2\rho_1))\cos\rho_4 + 3(\cos\rho_2-1)\sin\rho_2 \sin\rho_3
\end{multlined}
\end{equation}
\begin{equation}\label{eq:2pair4}
4\cos\rho_2 + 3\cos(2\rho_2)=
1+2\cos\rho_4-4\sin\rho_1\sin\rho_4+2\cos\rho_1(1+\cos\rho_4)
\end{equation}
\begin{equation}\label{eq:2pair3}
\begin{multlined}  
     24\cos\rho_1+24\cos\rho_2 + 6\cos(2\rho_1) +6\cos(2\rho_2) + 27\cos(2(\rho_1+\rho_2)) - 9\cos(2(\rho_1-\rho_2))  \\
\shoveleft{=24\cos(\rho_1-2\rho_2) + 24\cos(2\rho_1-\rho_2) + 24\cos(\rho_1+\rho_2)+40\cos(\rho_1-\rho_2)-34.}
\end{multlined}  
\end{equation}

\end{theorem}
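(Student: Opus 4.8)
The plan is to work with the \emph{exact} (global) closure relation $F(\vecrho)=I_3$ of Equation~\eqref{eq:R=I}, which holds at every rigid folded state and not merely to second order, so that no local-to-global extension is required. Writing each crease rotation in conjugated form $R(c_i,\rho_i)=R_z(\theta_i)R_x(\rho_i)R_z(-\theta_i)$ with $\theta_i=(i-1)\pi/3$ for $G_{60}$, the intermediate $R_z$ factors telescope, and using $R_z(-\theta_6)=R_z(-5\pi/3)=R_z(\pi/3)$ the closure collapses to the alternating product $\prod_{i=1}^{6}R_x(\rho_i)R_z(\pi/3)=I_3$ (the same normal form used in the generalized trifold proof). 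Substituting the two pair pattern $(\rho_1,\rho_1,\rho_2,\rho_2,\rho_3,\rho_4)$ and abbreviating $P=R_z(\pi/3)$ yields the single matrix equation $R_x(\rho_1)PR_x(\rho_1)PR_x(\rho_2)PR_x(\rho_2)P\,R_x(\rho_3)PR_x(\rho_4)P=I_3$.

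Next I would split this at the crease~$4$/crease~$5$ boundary, writing $A(\rho_1,\rho_2)\,B(\rho_3,\rho_4)=I_3$ with $A=R_x(\rho_1)PR_x(\rho_1)PR_x(\rho_2)PR_x(\rho_2)P$ and $B=R_x(\rho_3)PR_x(\rho_4)P$, so that $B=A^{-1}=A^{\mathsf T}$. The key observation is that, because $R_x(\rho_3)$ is the leftmost factor of $B$ and mixes only the second and third rows, the \emph{first row} of $B$ is independent of $\rho_3$ and depends on $\rho_4$ alone; one computes it to be $\bigl(\tfrac14-\tfrac34\cos\rho_4,\ -\tfrac{\sqrt3}{4}(1+\cos\rho_4),\ \tfrac{\sqrt3}{2}\sin\rho_4\bigr)$. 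Since $B=A^{\mathsf T}$, this equals the first \emph{column} of $A$, a function of $\rho_1,\rho_2$ only. The combination of these three scalar identities that is independent of $\rho_4$ (concretely $A_{11}-\sqrt3\,A_{21}=1$) is a relation in $\rho_1,\rho_2$ alone, which after double-angle reduction is Equation~\eqref{eq:2pair3}; combining the remaining two row-$1$ identities (a cross-multiplication that introduces the mixed $\sin\rho_1\sin\rho_4$-type terms) produces the $\rho_1,\rho_2,\rho_4$ relation Equation~\eqref{eq:2pair4}. Finally, matching one further entry of $B=A^{\mathsf T}$ that genuinely involves $\rho_3$ --- a second-column or diagonal entry --- and reducing with product-to-sum identities gives the full four-variable relation Equation~\eqref{eq:2pair5}.

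For the degree-of-freedom count I would note that $F(\vecrho)\in SO(3)$ always, so $F(\vecrho)=I_3$ imposes exactly three independent scalar conditions; with the ansatz leaving four free angles this forces a generically $1$-dimensional solution set, matching the $1$-DOF entry of Table~\ref{table1}. To guarantee that the three displayed equations actually \emph{describe} the configuration space rather than a superset, I would verify completeness: matching the first column of $A$ to the first row of $B$ supplies two independent conditions (equating two unit vectors), and a single additional independent entry-equation involving $\rho_3$ forces the full identity $A=B^{\mathsf T}$, since an element of $SO(3)$ is determined by its agreement with another on one vector together with one further independent component. Thus the triple \eqref{eq:2pair5}--\eqref{eq:2pair3} is equivalent to $F=I_3$.

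I expect the main obstacle to be twofold. The lesser difficulty is the trigonometric bookkeeping: expanding the eight-factor product $A$ and collapsing the matched entries into the compact stated forms requires a long but routine sequence of double-angle and product-to-sum manipulations, and the clean appearance of terms such as $\cos(2(\rho_1+\rho_2))$ and $\cos(\rho_1-2\rho_2)$ in \eqref{eq:2pair3} signals that several entry-equations must be combined in exactly the right way. The more conceptual difficulty is selecting \emph{which} three of the nine entry-equations to record so that they are simultaneously individually simple, mutually independent, and jointly equivalent to $F=I_3$; this is precisely what lets one equation be $\rho_3$-free and another $\rho_4$-free while the third retains all four angles, and confirming that this particular triple is complete is the step most in need of care.
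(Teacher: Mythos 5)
Your proposal is workable, and its structural claims check out: the closure does collapse to $\prod_{i=1}^{6}R_x(\rho_i)R_z(\pi/3)=I_3$; the first row of $B=R_x(\rho_3)PR_x(\rho_4)P$ is exactly $\bigl(\tfrac14-\tfrac34\cos\rho_4,\ -\tfrac{\sqrt3}{4}(1+\cos\rho_4),\ \tfrac{\sqrt3}{2}\sin\rho_4\bigr)$; and $A_{11}-\sqrt{3}\,A_{21}=1$ is the correct $\rho_4$-free combination. Your split is also conceptually tight: $AB=I_3$ holds for \emph{some} $\rho_3$ precisely when $A^{-1}P^{-1}R_x(-\rho_4)P^{-1}$ fixes $e_1$, which is your row-$1$ system, so that system characterizes the projection of the configuration space onto $(\rho_1,\rho_2,\rho_4)$ exactly, with $\rho_3$ then determined --- a cleaner route to the 1-DOF claim than a generic dimension count. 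The paper proceeds differently on both halves. It obtains the $\rho_1,\rho_2$ relation \eqref{eq:2pair3} and the 1-DOF claim geometrically: cut out the face bounded by the two creases carrying the singleton angles, fold the two remaining flaps by $\rho_1$ and $\rho_2$, and require the images of the cut creases to form a $60^\circ$ angle, i.e., have dot product $1/2$ (Equations \eqref{eq:2pair1} and \eqref{eq:2pair2}). It then gets \eqref{eq:2pair4} and \eqref{eq:2pair5} from a $3{+}3$ split of the closure, Equation \eqref{eq:2pair6}, by equating the $(1,1)$ and $(3,3)$ entries; that split is engineered so the stated forms appear in one step, since in the $(1,1)$ entry the leftmost $R_x(\rho_3)$ fixes row $1$ and the rightmost $R_x(-\rho_1)$ fixes column $1$, forcing the shape [function of $\rho_2$ only] $=$ [function of $\rho_1,\rho_4$].

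Two soft spots need repair, neither fatal. First, your row-$1$ identities all have the separated shape $f(\rho_1,\rho_2)=g(\rho_4)$. Cross-multiplying two of them does create mixed products, but the resulting coefficients of $\cos\rho_4$ and $\sin\rho_4$ are entries of $A$ and hence depend on \emph{both} $\rho_1$ and $\rho_2$, whereas in \eqref{eq:2pair4} those coefficients depend on $\rho_1$ alone and the $\rho_2$-dependence sits isolated on the other side. So \eqref{eq:2pair4} does not fall out of a single cross-multiplication; it is a consequence of your system (it vanishes on the projection your system cuts out exactly), but recovering its stated form requires a further elimination --- solve the identities for $\cos\rho_4,\sin\rho_4$ and substitute, or simply adopt the paper's $3{+}3$ split, which exists precisely to make this equation immediate. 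Second, the completeness argument overclaims: row-$1$ agreement forces $BA=R_x(\phi)$ for some $\phi$, and one additional entry equation then yields $\sin(\phi/2)=0$ \emph{or} one spurious root $\tan(\phi/2)=c$, so your three equations may cut out an extraneous branch beyond the configuration space. The paper is no stronger here --- it establishes only necessity of its three equations --- but your claim that the triple is equivalent to $F(\vecrho)=I_3$ is not yet proved as stated.
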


\begin{proof}
 Let us orient the crease pattern as shown in Figure~\ref{fig:2pairconfig}(a).  We fix the face between creases $c_1$ and $c_6$ and imagine cutting along creases $c_3=\langle -1/2,\sqrt{3}/2,0\rangle$ and $c_4=\langle-1,0\rangle$ to remove one face of the crease pattern.  We pick a value for $\rho_1\in[-\pi,\pi]$ and fold creases $c_1$ and $c_2$ accordingly.  This determines the placement of crease $c_3$, and there will be finitely many values of $\rho_2$ that, when used to fold creases $c_5$ and $c_6$, will place $c_4$ so as to form a $60^\circ$ angle with the placement of $c_3$.  Thus the two pair rigid folding symmetry is a 1-DOF system.
 
Specifically, folding along $c_2$ and then $c_1$ by folding angle $\rho_1$ moves the crease $c_3$ into position
\begin{equation}\label{eq:2pair1}
    R(c_1,\rho_1)R(c_2,\rho_1)c_3 = R_z(\pi/3)R_x(\rho_1)R_z(-\pi/3)R_x(\rho_1)\begin{pmatrix}-1/2\\ \sqrt{3}/2 \\ 0\end{pmatrix}.
\end{equation}
On the other hand, folding along $c_5$ and then $c_6$ by folding angle $\rho_2$ moves the  crease $c_4$ into position
\begin{equation}\label{eq:2pair2}
    R(c_6,\rho_2)R(c_5,\rho_2)c_4 = R_z(-2\pi/3)R_x(-\rho_2)R_z(2\pi/3)R_z(-\pi/2)R_x(-\rho_2)R_z(\pi/3)\begin{pmatrix}-1\\ 0 \\ 0\end{pmatrix}.
\end{equation}
Our crease lines $c_i$ are of unit length, and we want the vectors resulting from \eqref{eq:2pair1} and \eqref{eq:2pair2} to form an angle of $\pi/3$ so that we may insert the face of $G_{60}$ that we cut away. Thus their dot product should equal $\cos\pi/3 = 1/2$.  Simplifying this gives us the relationship between the $\rho_1$ and $\rho_2$ folding angles shown in Equation~\eqref{eq:2pair3}.

Taking a different approach, we can rewrite the matrix product \eqref{eq:R=I} equaling the identity as follows:
\begin{equation}\label{eq:2pair6}
\begin{split}
R_x(\rho_3)R_z(\pi/3)R_x(\rho_2)R_z(\pi/3)R_x(\rho_2)R_z(\pi/3)=\\
R_z(-\pi/3)R_x(-\rho_4)R_z(-\pi/3)R_x(-\rho_1)R_z(-\pi/3)R_x(-\rho_1)
\end{split}
\end{equation}
Simplifying the $(1,1)$ entries of the two matrix products in \eqref{eq:2pair6} gives us Equation~\eqref{eq:2pair4}, while simplifying the $(3,3)$ entries gives us \eqref{eq:2pair5}.  
\end{proof}

\begin{figure}
    \centering
    \includegraphics[width=\linewidth]{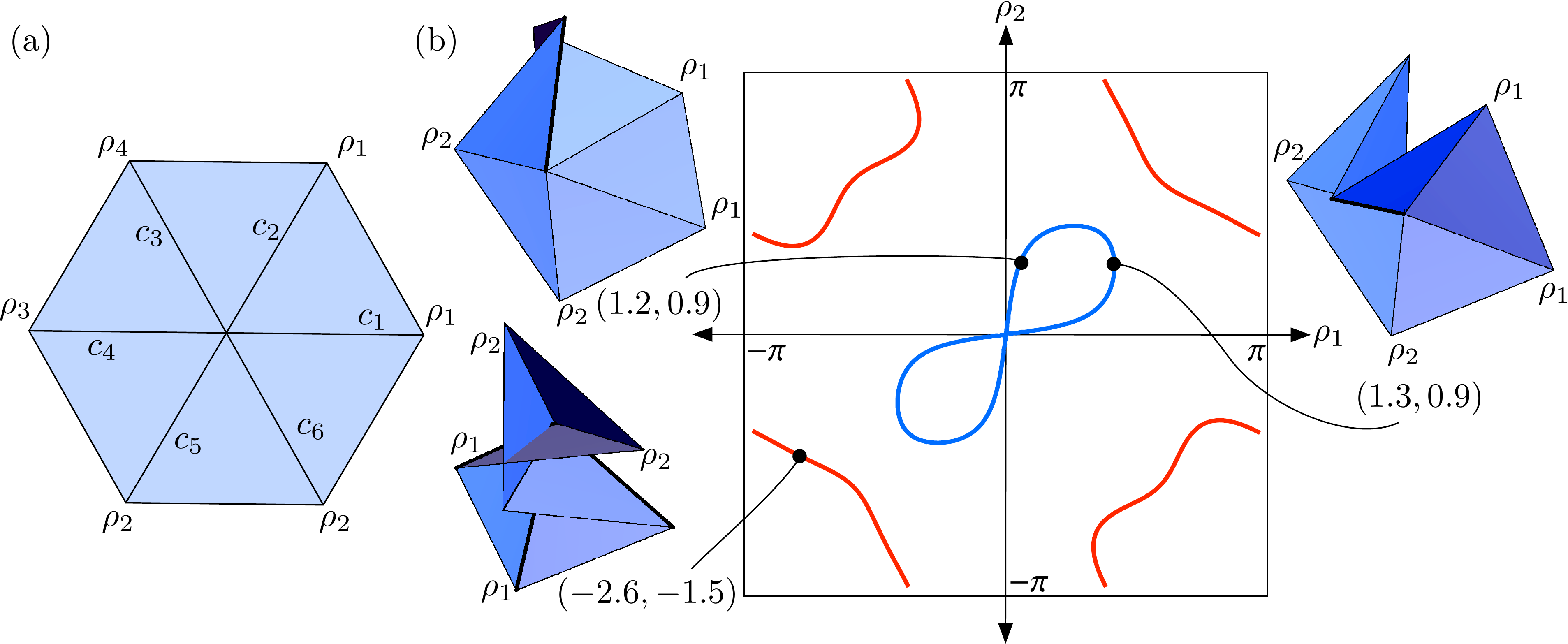}
    \caption{(a) The two pair crease pattern symmetry.  (b) The $(\rho_1,\rho_2)$ slice of the two pair configuration space, in blue, with the rigid folding of two points illustrated. The red curves are solutions to Equation~\eqref{eq:2pair3} that cause the paper to self-intersect, with one example shown.}
    \label{fig:2pairconfig}
\end{figure}

Note that Equation~\eqref{eq:2pair5} illustrates the symmetry of the two pair case; if $(\rho_1,\rho_1,\rho_2,\rho_2,\rho_3,\rho_4)$ is a point in the configuration space, then so is $(\rho_2,\rho_2,\rho_1,\rho_1,\rho_4,\rho_3)$.

The configuration space curve of the valid $(\rho_1,\rho_2)$ values that satisfy  Equation~\eqref{eq:2pair3} forms a figure eight and is shown in Figure~\ref{fig:2pairconfig}(b) in blue, along with images of two different rigid folding examples with the same value of $\rho_2=0.9$ radians.  The curves shown in red also represent values satisfying Equation~\eqref{eq:2pair3} but are not valid because they force the paper to self-intersect.

The equations in Theorem~\ref{thm:112234} defy manipulation attempts to get more simple relations between the folding angles using Weierstrass substitutions.  One reason for this is precisely because, as previously mentioned, we were unable to reduce the $(112234)$ symmetry case for $G_{60}$ to a kinematically-equivalent 1-DOF crease pattern with only degree-4 flat-foldable vertices.

\subsection{Fully and almost general cases}\label{sec:other}

As previously stated, the fully general rigid folding $(\rho_1,\rho_2,\rho_3,\rho_4,\rho_5,\rho_6)$ of $G_{60}$ has no known way to elegantly express its configuration space.  Yet it can be computed using standard kinematic analysis.  For completeness, and as a way to contrast with our previous results for the symmetric cases, we summarize one such method here, based the approach described by Balkcom \cite{Balkcom1,Balkcom2}.

We cut along the crease $c_3=\langle -1/2,\sqrt{3}/2,0\rangle$ to split it into a left side $c_3^l$ and a right side $c_3^r$, as shown in Figure~\ref{fig:fullgen}(a).  The fully general degree-6 vertex rigid folding has 3 DOF, and so we let $\rho_4$, $\rho_5$, and $\rho_6$ be our independent variables and we seek to find folding angle functions for $\rho_1$ and $\rho_2$.  Fix the face between creases $c_1$ and $c_6$.  Then folding along $c_2$ by $\rho_2$ followed by folding along $c_1$ by $\rho_1$ will move $c_3^r$ into position
\begin{equation}\label{eq:gen1}
R(c_1,\rho_1)R(c_2,\rho_2)c_3^r = R_x(\rho_1)R_z(\pi/3)R_x(\rho_2)R_z(-\pi/2)
\begin{pmatrix}-1/2\\ \sqrt{3}/2 \\ 0\end{pmatrix}.
\end{equation}
In the other direction, we fold along $c_4$ by $\rho_4$, then along $c_5$ by $\rho_5$, and finally along $c_6$ by $\rho_6$ to move $c_3^l$ to position
\begin{equation}\label{eq:gen2}
\begin{split}
R(c_6,\rho_6)R(c_5,\rho_5)R(c_4,\rho_4)c_3^l = \\
R_z(-\pi/3)R_x(\rho_6)R_z(\pi/3)R_z(-2\pi/3)R_x(\rho_5)R_z(2\pi/3)R_z(-\pi)R_x(\rho_4)R_z(\pi)\begin{pmatrix}-1/2\\ \sqrt{3}/2 \\ 0\end{pmatrix}.
\end{split}
\end{equation}
The vectors \eqref{eq:gen1} and \eqref{eq:gen2} must be equal, and the $x$-coordinate of \eqref{eq:gen1} is only dependent on $\rho_2$ (since the $R_x(\rho_1)$ matrix does not affect the $x$-axis).  In fact, the $x$-coordinate of \eqref{eq:gen1} is $(1-3\cos\rho_2)/4$, and equating this to the $x$-coordinate of \eqref{eq:gen2} gives us
\begin{equation}\label{eq:gen3}
    \begin{split}
    \cos\rho_2 = \frac{1}{4}(1+\cos\rho_6-2\sin\rho_4 \sin\rho_5 - 2\cos\rho_6\sin\rho_4\sin\rho_5 - 2\sin\rho_5\sin\rho_6+ \\
    \cos\rho_5(1+\cos\rho_6-4\sin\rho_4\sin\rho_6)+\cos\rho_4(1-3\cos\rho_6 +\cos\rho_5(1+\cos\rho_6)-2\sin\rho_5\sin\rho_6)).
    \end{split}
\end{equation}
Since cosine is an even function, Equation~\eqref{eq:gen3} gives us two solutions for $\rho_2$, one positive and one negative.  For $\rho_1$ we could equate the other coordinates of \eqref{eq:gen1} and \eqref{eq:gen2}, but a more robust formula can be obtained by removing the $R_x(\rho_1)$ term from \eqref{eq:gen1}, as the result will be a vector that forms an angle of $\rho_1$ with \eqref{eq:gen2}.  Letting $v=R_x(\pi/3)R_z(\rho_2)R_x(-\pi/3)(-1/2,\sqrt{3}/2,0) = (v_x, v_y, v_z)$ and $u=(u_x,u_y,u_z)$ be \eqref{eq:gen2}, we have
$$v=\langle \frac{1}{4}(1-3\cos\rho_2),\frac{\sqrt{3}}{2}\cos^2\frac{\rho_2}{2},\frac{\sqrt{3}}{2}\sin\rho_2\rangle, \mbox{ and}$$
\begin{equation}\label{eq:gen4}
    \rho_1 = \arctan\frac{u_z}{u_y}-\arctan\frac{v_z}{v_y}= \arctan\frac{u_z}{u_y}-\arctan \left(2\tan\frac{\rho_2}{2}\right).
\end{equation}
(The full expression for $\arctan(u_z/u_y)$ computed from \eqref{eq:gen2} is quite large and uninspiring; it is omitted to save space.) 

\begin{figure}
    \centering
    \includegraphics[width=\linewidth]{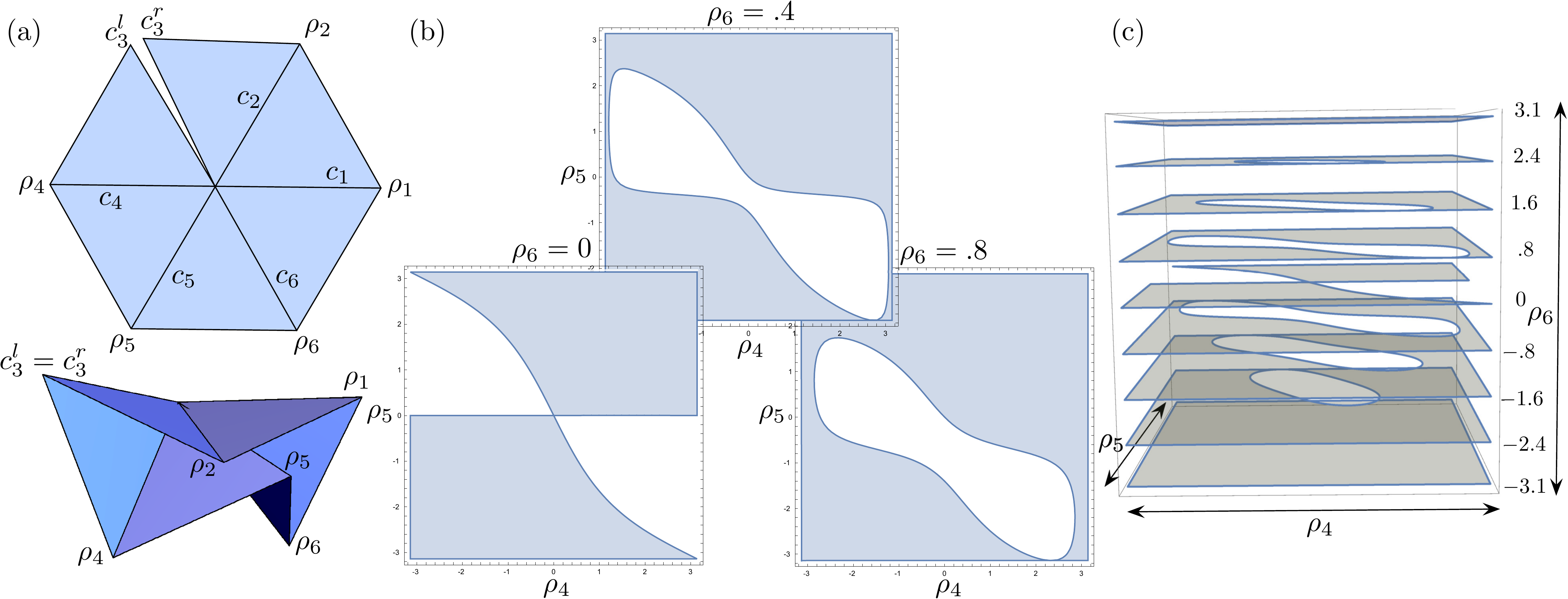}
    \caption{(a) The set-up for analyzing the fully general rigid folding of $G_{60}$, with one rigid folding shown.  (b) Three slices of the $(\rho_4,\rho_5,\rho_6)$ configuration space of ${\cal C}(G_{60})$.  (c) More slices stacked on the $\rho_6$ axis.}
    \label{fig:fullgen}
\end{figure}

Equations~\eqref{eq:gen3} and \eqref{eq:gen4} can be used to simulate general rigid foldings of $G_{60}$, and an example is shown in Figure~\ref{fig:fullgen}(a).  Note that in this 3-DOF system, not all values of $(\rho_4,\rho_5,\rho_6)$ produce solutions for $\rho_1$ and $\rho_2$; e.g. some combinations will force creases $c_1$ and $c_3^l$ to be too far away from each other.  Figure~\ref{fig:fullgen}(b) shows slices of the  $(\rho_4,\rho_5,\rho_6)$ configuration space, for $\rho_6=0$, $.4$, and $.8$,  with the valid $(\rho_4,\rho_5)$ points shaded in.  Figure~\ref{fig:fullgen}(c) shows more such slices stacked on the $\rho_6$ axis to give a sense of the inadmissible region inside this configuration space.

The $(\rho_1,\rho_1,\rho_2,\rho_3,\rho_4,\rho_5)$ rigid folding symmetry of $G_{60}$, which is the $(112345)$ bracelet pattern and is referred to as the ``almost general" case in Table~\ref{table1}, is the least amount of symmetry we could try to impose on the fully general case.  As such, we cannot expect this 2-DOF rigid folding to have elegant folding angle expressions.  However, one may set $\rho_6=\rho_5$ in Equations~\eqref{eq:gen3} and \eqref{eq:gen4} to obtain expressions for use in rigid folding simulations of this case.

\section{Conclusion}\label{sec5}

We have seen that while capturing the kinematic folding angle relationships for rigid foldings of the degree-6 vertex $G_{60}$ crease pattern leads to unwieldy and complicated equations, imposing symmetry on the folding angles yields, in most cases, elegant folding angle expressions, sometimes even linear equations when parameterized with a Weierstrass substitution.  Our proofs have also revealed an explanation for why Weierstrass substitutions work so well in such equations:  In each 1-DOF symmetric rigid folding of $G_{60}$ where Weierstrass substitutions were effective, we were able to reduce the degree-6 crease pattern to a degree-4, flat-foldable (multiple-vertex) crease pattern where the Weierstrass substitution definitely applies by Theorem~\ref{thm:deg4}.  That some higher-degree crease patterns are kinematically-equivalent to degree-4 flat-foldable crease patterns seems a logical reason why Weierstrass substitutions are so effective in rigid origami.

There are many avenues for future work from the results of this paper.  What follows is a list of suggestions and open questions.

\begin{itemize}
    \item Can the folding angle expressions presented in this paper be further simplified?  In the engineering and scientific literature one can find several different (yet equivalent) ways to express the degree-4 flat-foldable folding angle relationships (e.g., see \cite{Stern} and \cite{Izmestiev}), but those shown in Theorem~\ref{thm:deg4} are favorable since they illustrate the tangent half-angle linearity. The degree-6 vertex symmetries presented here that use the Weierstrass substitution are likewise favorable.  But, for example, could the 2-DOF equations for in the generalized igloo (123432) pattern (in Theorem~\ref{thm:iglooconfig}) be improved?
    \item How can we explain the effectiveness of Weierstrass substitutions in 2- or higher-DOF rigid foldings, such as in the opposites (123123) case?  Such cases cannot be reduced to 1-DOF degree-4 crease patterns, so a different reasoning, perhaps one purely algebraic, might lurk within such equations.
    \item In rigid vertex folding symmetries  without rotational or reflection symmetry, reduction to degree-4 flat-foldable crease patterns does not seem possible, as seen in the two pair $(112234)$ pattern. Since the two pair pattern is a 1-DOF rigid folding, it might have simple folding angle relationships yet to be discovered.  Can other techniques be found to handle such cases?
\end{itemize}

We hope that the degree-6 folding angle relationships presented here will offer new tools to mechanical engineers and materials scientists looking for more origami methods to incorporate into their designs.  As Tachi et al. show in \cite{Tachi2}, using the folding angle expressions in Equation~\eqref{eq:trifold} to program spring actuators on the creases of a rigidly folding trifold mechanism results in global convergence of the rigid folding to a desired state.  The new folding angle relationships we have presented should work equally well and provide new rigid folding mechanisms to expand designers' rigid origami vertex repertoire from degree-4 to include degree-6 vertices as well.




\end{document}